\newtheorem{thm}{Theorem}[section]
\newtheorem{cor}[thm]{Corollary}
\newtheorem{lem}[thm]{Lemma}
\newtheorem{prop}[thm]{Proposition}
\theoremstyle{definition}
\theoremstyle{remark}
\newtheorem{rem}[thm]{Remark}
\numberwithin{equation}{section}
\DeclareMathSymbol{\C}{\mathalpha}{AMSb}{"43}
\newcommand{\bt}{\begin{thm}}
\newcommand{\et}{\end{thm}}
\newcommand{\beq}{\begin{equation}}
\newcommand{\eeq}{\end{equation}}
\newcommand{\eps}{\varepsilon}
\newcommand{\n}{\nabla}
\newcommand{\abs}[1]{\left|#1\right|}
\newcommand{\N}{\mathbb{N}}
\newcommand{\R}{{\mathbb{R}}}
\newcommand{\RT}{{\mathbb{R}^3}}
\newcommand{\E}{{\mathcal{E}}}
\newcommand{\K}{{\mathcal{K}}}
\newcommand{\A}{{\mathcal{A}}}
\renewcommand{\P}{{\mathcal{P}}}
\newcommand{\irt}{\int_{\mathbb{R}^3}}
\newcommand{\de}{\partial}
\newcommand{\bsub}{\begin{subequations}}
\newcommand{\esub}{\end{subequations}$\!$}
\begin{document}
	\title[Schr\"{o}dinger-Bopp-Podolsky system]{
On a nonlinear Schr\"{o}dinger-Bopp-Podolsky system\\ in the zero mass case: functional framework and existence}

\author[E. Caponio]{Erasmo Caponio}
\address{E. Caponio
\newline\indent Dipartimento di Meccanica, Matematica e Management,\newline\indent
	Politecnico di Bari
	\newline\indent
	Via Orabona 4,  70125  Bari, Italy}
\email{erasmo.caponio@poliba.it}
    
\author[P. d'Avenia]{Pietro d'Avenia}
\address{P. d'Avenia
\newline\indent Dipartimento di Meccanica, Matematica e Management,\newline\indent
	Politecnico di Bari
	\newline\indent
	Via Orabona 4,  70125  Bari, Italy}
\email{pietro.davenia@poliba.it}

\author[A. Pomponio]{Alessio Pomponio}
\address{A. Pomponio
\newline\indent Dipartimento di Meccanica, Matematica e Management,\newline \indent
	Politecnico di Bari
	\newline\indent
	Via Orabona 4,  70125  Bari, Italy}
\email{alessio.pomponio@poliba.it}

\author[G. Siciliano]{Gaetano Siciliano}
\address{G. Siciliano
\newline\indent Dipartimento di Matematica,\newline \indent
	Univerist\`a degli Studi di Bari Aldo Moro, 
	\newline\indent
	Via Orabona 4,  70125  Bari, Italy}
\email{gaetano.siciliano@uniba.it}

\author[L. Yang]{Lianfeng Yang}
\address{L. Yang
\newline \indent $^1$ School of Mathematics and Information Science, \newline \indent Guangxi University, 
\newline \indent Nanning, Guangxi, P. R. China
\newline\indent $^2$ Dipartimento di Meccanica, Matematica e Management,
\newline \indent Politecnico di Bari
\newline\indent	Via Orabona 4,  70125  Bari, Italy}
\email{yanglianfeng2021@163.com}

    \begin{abstract}
		In this paper, we consider in $\R^3$ the following zero mass Schr\"{o}dinger-Bopp-Podolsky system 
		\[
			\begin{cases}
				-\Delta u  +q^2\phi u=|u|^{p-2}u\\
				-\Delta \phi+a^2\Delta^2\phi=4\pi u^2
			\end{cases}
			\]
		where $a>0$, $q\ne 0$ and $p\in (3,6)$.
Inspired by \cite{R10}, we introduce a  Sobolev space $\E$ endowed with a norm containing a nonlocal term. Firstly, we provide some fundamental properties for the space $\E$ including embeddings into Lebesgue spaces. Moreover a general lower bound for the Bopp-Podolsky energy is obtained. Based on these facts, by applying a perturbation argument, we finally prove the existence of a weak solution to the above system.
\end{abstract}

\keywords{Schr\"{o}dinger-Bopp-Podolsky system; zero mass problem; variational methods.}
\subjclass[2020]{35J48, 35J50, 35Q60.}
	\maketitle

    \begin{center}
        \begin{minipage}{10cm}
	    \tableofcontents
	\end{minipage}
    \end{center}

\section{Introduction}
The Bopp-Podolsky theory, developed independently by Bopp \cite{B40} and Podolsky \cite{P42}, is a second-order gauge theory for the electromagnetic field that addresses the so-called {\sl infinity problem} associated with a point charge in the classical Maxwell theory. From a historical viewpoint, it is also natural to refer to the Bopp--Land\'e--Thomas--Podolsky theory, since related higher-order electrodynamic models were developed in the same period by Land\'e and Thomas; see, for instance, \cite{CarleyKiesslingPerlick2019} for a modern discussion and for an analysis of the Schr\"odinger spectrum of the hydrogen atom in this framework. Specifically, in Maxwell theory, by the well-known Gauss law (or Poisson equation), the electrostatic potential $\phi$ for a given charge distribution with density $\rho$ satisfies the equation
\begin{equation}\label{eq3.6}
	-\Delta \phi=\rho \qquad\text{ in } \R^3.
\end{equation}
If we let $\rho=4\pi \delta_{x_0}$ with $x_0\in \R^3$, then the fundamental solution of equation \eqref{eq3.6} is $\mathcal{C}(x-x_0)$ with $\mathcal{C}(x):=1/|x|$, and therefore the energy of the corresponding electrostatic field is not finite since
$$\int_{\R^3}|\nabla \mathcal{C}|^2dx=+\infty.$$
On the other hand, in the Bopp-Podolsky theory, equation \eqref{eq3.6} is replaced by
\begin{equation}\label{eq2.5}
	-\Delta \phi+a^2\Delta^2\phi=\rho \qquad\text{ in } \R^3
\end{equation}
with $a>0$.
Taking $\rho=4\pi \delta_{x_0}$ again, the explicit solution of equation \eqref{eq2.5} is $\mathcal{K}(x-x_0)$, where
\begin{equation*}
	\mathcal{K}(x):=\frac{1-e^{-\frac{|x|}{a}}}{|x|}
\end{equation*}
and, since
$$\int_{\R^3}|\nabla \mathcal{K}|^2dx+a^2\int_{\R^3}|\Delta \mathcal{K}|^2dx<+\infty$$
(see \eqref{nablaK} and \eqref{deltaK}), the energy of the electrostatic field generated by a point charge is finite.
     
  In addition, the Bopp-Podolsky theory may be interpreted as an effective theory at short distances,  while at  large distances it is experimentally indistinguishable from the Maxwell one. Both Bopp and Podolsky were influenced by the work of Yukawa \cite{yukawa} (see \cite[p. 345]{B40} and \cite[\S 4]{P42}) and indeed the {\em Bopp-Podolsky potential} $\K$ involves the Coulomb and the Yukawa potentials (see for more details Proposition~\ref{GreenDelta3}).

The coupling of quantum mechanics with Bopp-Podolsky electrodynamics leads naturally to  nonlinear systems that have attracted considerable attention in recent years.
	In \cite[Section 2]{DS19}, the authors coupled a Schr\"{o}dinger field $\psi=\psi(t,x)$ with its electromagnetic field in the Bopp-Podolsky
theory, and, in particular, in the electrostatic case for the standing waves $\psi(t,x)=e^{i\omega t}u(x)$, they studied the  following system 
	\begin{equation}\label{eq2.6}
		\begin{cases}
			-\Delta u +\omega u +q^2\phi u=|u|^{p-2}u , \\
			-\Delta \phi+a^2\Delta^2\phi=4\pi u^2,
		\end{cases}
        \text{ in } \R^3.
	\end{equation}
	
    From a physical point of view, in \eqref{eq2.6}, $a>0$, the parameter of the potential $\K$, has dimension of the inverse of mass in natural units and can be interpreted as a cut-off distance or can be linked to an effective radius for the electron; $q\neq0$ denotes the coupling constant of the interaction between the particle and its electromagnetic field; $\omega\in\R$ is the frequency of the standing wave $\psi(t,x)=e^{i\omega t}u(x)$;  $\phi$ is the electrostatic potential.
    
    Recently, a lot of works related to system  \eqref{eq2.6} have been carried out.  
    For instance, we refer the reader to the interesting results obtained in \cite{DPRS23,FS21,RSS24} for the problem with different constraints,  in \cite{CT20,LPT20,SDS24} for critical cases, in \cite{WCL22,Z24} concerning sign-changing solutions, in \cite{DG22,H19,H20} for nonlinear Schr\"odinger-Bopp-Podolsky-Proca system on  manifolds,  \cite{CLRT22,G23,S20} in further different contexts (see also references therein).

In the case $a=0$, system \eqref{eq2.6} reduces to the well known Schr\"odinger-Poisson system
	\begin{equation}\label{SP}
		\begin{cases}
			-\Delta u +\omega u +q^2\phi u=|u|^{p-2}u , \\
			-\Delta \phi=4\pi u^2,
		\end{cases}
        \text{ in } \RT,
	\end{equation}
which has been extensively studied by many authors.

It's well known that, for any fixed suitable $u$, the second equation of \eqref{SP} has a unique solution,
$1/|\cdot|*u^2$,
which vanishes at infinity, and therefore it is possible to rewrite \eqref{SP} as the nonlocal equation
\begin{equation}\label{SPeq}
    	-\Delta u +\omega u +q^2\left(\frac1{|x|}*u^2 \right)u=|u|^{p-2}u \qquad\text{ in } \RT.
\end{equation}

In particular, looking for static solutions, namely when $\omega=0$, the so called {\em zero mass} case has been considered in \cite{IR12,R10}. First,  for $p\in (18/7,3)$, Ruiz in \cite{R10} obtained the existence of a solution of \eqref{SPeq} as a minimizer of the associated energy functional 
\begin{equation*}
J(u) = \frac{1}{2}\|\nabla u\|_2^2 + \frac{q^2}{4}\irt \irt \frac{u^2(x) u^2(y)}{|x-y|}\, dx\,dy - \frac{1}{p}\|u\|^p_p
\end{equation*}
in the  functional space
\begin{equation*}
E_r=\left\{u\in D^{1,2}(\RT):u\text{ radial and }\irt \irt \frac{u^2(x) u^2(y)}{|x-y|}\, dx\,dy  <+\infty\right\},
\end{equation*}
where    $$D^{1,2}(\R^3):=\{u\in L^6(\R^3):|\nabla u|\in L^2(\R^3)\},$$ endowed with the usual norm $ \|\n \cdot\|_2$.
As an intermediate and crucial step  in his arguments, he established a general lower bound for the Coulomb energy which, in particular, applied to 
\[
D(u^2,u^2):=\irt \irt \frac{u^2(x) u^2(y)}{|x-y|}\, dx\,dy, 
\]
allowed him to get suitable embeddings of $E_r$ into Lebesgue spaces. In \cite{IR12}, instead, the authors proved the existence of a  ground state  and of infinitely many radial bound states for $p\in (3,6)$. For a related problem, see also \cite{MMV16,BGMMVS18}.

Motivated by \cite{IR12,MMV16,R10}, in this paper, we consider \eqref{eq2.6}  with $\omega=0$, namely we investigate
the zero mass Schr\"odinger-Bopp-Podolsky system
	\begin{equation}\tag{$\P$}\label{eq1.1}
		\begin{cases}
			-\Delta u +q^2\phi u=|u|^{p-2}u,  \\
			-\Delta \phi+a^2\Delta^2\phi=4\pi u^2,
		\end{cases} 
        \text{ in }  \R^3,
	\end{equation}
where $a>0$, $q\neq 0$, and $p\in (3,6)$.

	In order to state our main results, we introduce some notations and give some definitions. 
Let
\begin{equation*}
\mathcal{A}:=\Big \{\varphi\in  D^{1,2}(\R^3):\Delta\varphi \in L^2(\R^3)\Big\}
\end{equation*}
equipped with the scalar product
	$$\langle\varphi,\psi\rangle_{\mathcal{A}}:=\int _{\R^3}\nabla\varphi\cdot\nabla\psi dx+ a^2\int _{\R^3}\Delta\varphi\Delta\psi dx.$$
We denote by $\|\cdot\|_\A$ the associated norm.
From \cite{DS19}, we know that $C_c^\infty(\R^3)$  is dense in  the Hilbert space $\A$ and
	$$\mathcal{A}\hookrightarrow L^\tau(\R^3), \quad\text{for }\tau\in [6,+\infty].$$

For measurable functions $f,g\colon \R^3\to \R$,  we set
    \begin{equation}\label{eq:functional}
		V(f,g) :=  \int_{\R^3} \int_{\R^3} \K(x-y) f(x) g(y)  \,dx\,dy
	\end{equation}
and refer to it as {\em Bopp-Podolsky energy}, in analogy with potential theory (see \cite[Ch. 9]{LieLos01}).

    Finally, similarly to \cite{R10},
    we introduce the space $\E$ of the functions in $D^{1,2}(\R^3)$ 
    with finite  Bopp-Podolsky energy, i.e.
    \begin{equation}\label{E}
    \E:=\left\{u \in D^{1,2}(\RT) : V(u^2,u^2)<+\infty\right\}
    \end{equation}
    equipped with the norm
$$\|u\|_{\E}:=\left(\|\nabla u\|^2_2+V(u^2,u^2)^\frac{1}{2}\right)^\frac{1}{2}$$
(see Proposition \ref{PropA.2}).\\

In this paper we mainly investigate two types of results. The first type concerns  the characterization of the functional space $\mathcal E$ and some of its useful embedding properties.\\
Let us start with the following 

	\begin{thm}\label{thmE}
	We have
		\[
		\E=\{u\in D^{1,2}(\RT):\phi_u\in \A\},
		\]
		where 
		\begin{equation*}
			\phi_u(x):=(\K*u^2 )(x)=\int_{\R^3}\frac{1-e^{-\frac{|x-y|}{a}}}{|x-y|}u^2(y)dy.
\end{equation*}
Moreover, for each $u\in \E$,  $\phi_u$ is the unique weak solution in $\mathcal A$
		of $-\Delta \phi_u+a^2\Delta^2\phi_u=4\pi u^2$ in $\RT$,
        and
				\[
\|\phi_u\|^2_\mathcal A = 	\int_{\mathbb{R}^3} (|\nabla \phi_u|^2 + a^2 |\Delta \phi_u|^2) dx = 4\pi\int_{\R^3} \phi_u u^2 dx
=4\pi V(u^2,u^2).
\]
	\end{thm}
In addition, the following embedding result holds.
\begin{thm}\label{tE36}
The space $\E$ is continuously embedded into $L^\tau(\RT)$, for any $\tau\in [3,6]$. 
\end{thm}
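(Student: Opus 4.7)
The endpoint $\tau=6$ is immediate: the definition of $\|\cdot\|_\E$ yields $\|\nabla u\|_2 \le \|u\|_\E$, and the Sobolev embedding $D^{1,2}(\R^3)\hookrightarrow L^6(\R^3)$ gives $\|u\|_6 \le S\|\nabla u\|_2\le S\|u\|_\E$. Once I also establish the continuous embedding $\E\hookrightarrow L^3(\R^3)$, every intermediate exponent $\tau\in(3,6)$ follows from the standard Lyapunov interpolation
\[
\|u\|_\tau \le \|u\|_3^{1-\theta}\|u\|_6^{\theta},\qquad \tfrac{1}{\tau}=\tfrac{1-\theta}{3}+\tfrac{\theta}{6}.
\]
So the whole substance of the statement is the case $\tau=3$.

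\textbf{Reaching $L^3$.} For this endpoint I intend to invoke the general lower bound on the Bopp-Podolsky energy that the paper obtains, in the spirit of Ruiz's Coulomb estimate \cite{R10}. The natural form to aim at is a H\"{o}lder-type inequality
\[
\|u\|_3^{3}\;\le\;C\,\|\nabla u\|_2\,V(u^2,u^2)^{1/2}\qquad \forall\,u\in\E,
\]
which is dimensionally forced by the invariance of $\|\nabla u\|_2$ and the scaling of $V(u^2,u^2)$ and $\|u\|_3$ under $u\mapsto \lambda^{1/2}u(\lambda\,\cdot)$. Granted this, the definition of the norm gives directly $\|\nabla u\|_2\le \|u\|_\E$ and $V(u^2,u^2)^{1/2}\le \|u\|_\E^{2}$, whence
\[
\|u\|_3^{3}\;\le\;C\,\|u\|_\E\cdot \|u\|_\E^{2}\;=\;C\,\|u\|_\E^{3},
\]
i.e.\ the desired continuous embedding $\E\hookrightarrow L^3(\R^3)$.

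\textbf{Where the difficulty lies.} The interpolation and the $L^6$ step are entirely routine; all the work is concentrated in the lower bound that produces the $L^3$ estimate. The obstacle is that the Bopp-Podolsky kernel satisfies $0\le \K(x)\le 1/|x|$, so that $V(u^2,u^2)\le D(u^2,u^2)$: the comparison with the Coulomb energy goes in the \emph{wrong} direction for a lower bound, and Ruiz's argument from \cite{R10} cannot be copied verbatim. The natural remedy is to use the decomposition $\K(x)=1/|x|-e^{-|x|/a}/|x|$, adapt Ruiz's Hardy--Littlewood--Sobolev / layer-cake manipulations to the long-range Coulomb piece, and absorb the exponentially decaying Yukawa remainder through an interpolation between $L^2$ and $L^6$ supplied by Sobolev. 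Balancing these contributions so that the unwanted tail terms can be reabsorbed is the nontrivial part, and it is the main obstacle in the proof.
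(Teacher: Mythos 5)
Your reduction matches the paper's skeleton exactly: the endpoint $\tau=6$ via Sobolev, interpolation in between, and everything concentrated in the inequality $\|u\|_3^3\le C\,\|\nabla u\|_2\,V(u^2,u^2)^{1/2}$. That inequality is precisely the paper's Proposition~\ref{pr3}, since by Theorem~\ref{thmE} one has $\|\phi_u\|_{\mathcal A}^2=4\pi V(u^2,u^2)$, so $\|u\|_3^3\le \frac1\pi\|\phi_u\|_{\mathcal A}\|\nabla u\|_2=\frac{2}{\sqrt{\pi}}V(u^2,u^2)^{1/2}\|\nabla u\|_2$. The problem is that you never prove it, and both of your supporting arguments fail. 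The scaling justification is invalid: under $u\mapsto\lambda^{1/2}u(\lambda\cdot)$ the quantities $\|\nabla u\|_2$ and $\|u\|_3$ scale as you say, but $V$ does \emph{not} scale by a power of $\lambda$, because $\K$ is inhomogeneous; one gets $V_a(u_\lambda^2,u_\lambda^2)=\lambda^{-3}V_{\lambda a}(u^2,u^2)$, i.e.\ the rescaling changes the parameter $a$ (the paper warns explicitly that, due to the lack of homogeneity of $\K$, standard scaling arguments cannot be applied). Since $\K(z)\to 1/a$ as $z\to 0$, the kernel is strictly weaker than Coulomb at short range, so dimensional analysis cannot ``force'' the inequality; its validity rests on the fourth-order structure, namely the term $a^2\|\Delta\phi_u\|_2^2$ hidden inside $\|\phi_u\|_{\mathcal A}^2=4\pi V(u^2,u^2)$. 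Also, Theorem~\ref{Thm1}, which you invoke as the engine, is used in the paper only for the \emph{radial} embedding (Theorem~\ref{tEr187}); without symmetry a decaying-weight lower bound cannot yield an $L^3$ estimate.

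The repair strategy you sketch for the hard step is also unworkable as stated. Absorbing the Yukawa remainder ``through an interpolation between $L^2$ and $L^6$'' has nothing to stand on: in the zero mass setting $\E\subset D^{1,2}(\RT)$ and $u\in L^2(\RT)$ is exactly what is \emph{not} available. Moreover, writing $V(u^2,u^2)=D(u^2,u^2)-Y(u^2,u^2)$ with $Y(f,g)=\int\int e^{-|x-y|/a}|x-y|^{-1}f(x)g(y)\,dx\,dy$ requires an a priori upper bound on $Y(u^2,u^2)$, and every natural bound is circular: by Young's inequality the sharp-looking estimate is $Y(u^2,u^2)\le \bigl\|e^{-|\cdot|/a}/|\cdot|\bigr\|_{3/2}\,\|u\|_3^4$, i.e.\ it needs the very $L^3$ information being proved, while with only $\|u\|_6$ (i.e.\ $u^2\in L^3$) the exponents $1/p+1/q+1/r=2$ cannot be matched. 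So for a general $u\in\E$ you have no control whatsoever on $Y(u^2,u^2)$ and the decomposition threatens to be $\infty-\infty$. The paper avoids all of this: it takes the inequality $\|u\|_3^3\le\frac1\pi\|\phi_u\|_{\mathcal A}\|\nabla u\|_2$, known for $u\in H^1(\RT)$ from \cite[Proposition 2.2]{SS20}, and extends it to $\E$ via the cutoff approximation $u_n=\theta_n u$ of Proposition~\ref{density}, using Fatou's lemma on the left-hand side and the monotonicity $u_n^2\le u^2$ (which gives $\|\phi_{u_n}\|_{\mathcal A}\le\|\phi_u\|_{\mathcal A}$ through the energy identity of Theorem~\ref{thmE}) together with $\nabla u_n\to\nabla u$ in $L^2(\RT)$ on the right. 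To complete your proposal you would need to either quote and extend the \cite{SS20} inequality in this way, or find an independent proof of the $L^3$ bound exploiting the bi-Laplacian term rather than a comparison with the Coulomb energy.
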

Moreover, denoting by $\E_r$ the subspace of radial functions in $\E$, we can say something more on the embedding properties.
\begin{thm}\label{tEr187}
    The space $\E_r$ is continuously embedded into $L^\tau(\RT)$, for any $\tau\in (18/7,6]$. The embedding is compact for any $\tau\in (18/7,6)$.
\end{thm}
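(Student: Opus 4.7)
The plan is to split the range as $\tau \in [3,6]$, already covered by Theorem \ref{tE36}, and $\tau \in (18/7, 3)$, which requires the radial structure; compactness then follows from the continuous embedding via a standard Strauss-decay argument.

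For the continuous embedding on $\tau \in (18/7, 3)$, I would identify $\E_r$ as a set with Ruiz's space from \cite{R10}, namely
\[
E_r := \left\{u \in D^{1,2}(\RT) : u \text{ radial, } D(u^2, u^2) < +\infty\right\},
\]
where $D(u^2, u^2) := \irt \irt u^2(x) u^2(y) / |x-y|\, dx\, dy$, and then invoke the embedding $E_r \hookrightarrow L^\tau$ for $\tau \in (18/7, 6]$ proved there. The identification follows from the splitting $\K(z) = 1/|z| - e^{-|z|/a}/|z|$, which gives
\[
D(u^2, u^2) = V(u^2, u^2) + \irt \irt \frac{e^{-|x-y|/a}}{|x-y|}\, u^2(x) u^2(y)\, dx\, dy.
\]
Since $e^{-|\cdot|/a}/|\cdot| \in L^1(\RT)$, Young's inequality bounds the Yukawa-type remainder above by $4\pi a^2 \|u\|_4^4$, and by Theorem \ref{tE36} the $L^4$ norm is controlled by $\|u\|_\E$. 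Thus bounded sets in $\E_r$ remain bounded in $E_r$, and the continuous embedding into $L^\tau$ follows.

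For the compactness with $\tau \in (18/7, 6)$, take $(u_n) \subset \E_r$ bounded; up to a subsequence, $u_n \weakto u$ in $D^{1,2}(\RT)$, with $u \in \E_r$ by weak lower semicontinuity of $V(u^2, u^2)$ via Theorem \ref{thmE}. On any ball $B_R$, Rellich--Kondrachov gives $u_n \to u$ strongly in $L^\tau(B_R)$ for $\tau < 6$. For the tail, fix $\sigma \in (18/7, \tau)$; by the continuous embedding already established, $\|u_n\|_\sigma$ is uniformly bounded. The Strauss radial lemma for $D^{1,2}$ functions yields $|u_n(x)| \leq C|x|^{-1/2}\|\nabla u_n\|_2 \leq C'|x|^{-1/2}$, hence
\[
\int_{|x|>R} |u_n|^\tau\, dx \leq \|u_n\|_{L^\infty(|x|>R)}^{\tau-\sigma}\, \|u_n\|_\sigma^\sigma \leq C R^{-(\tau-\sigma)/2} \longrightarrow 0
\]
as $R \to \infty$, uniformly in $n$. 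Combining local strong convergence with uniform tail decay yields $u_n \to u$ in $L^\tau(\RT)$.

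The main difficulty lies in the continuous embedding near the critical threshold $\tau = 18/7$: this is the sharp exponent in \cite{R10}, obtained through a delicate Coulombic lower bound for $D(u^2, u^2)$. The approach above delegates this step to Ruiz's theorem, exploiting that $\K$ agrees with the Coulomb kernel at infinity up to an $L^1$ Yukawa correction whose contribution is absorbed by Theorem \ref{tE36}. A fully self-contained alternative would have to reproduce the radial decomposition estimates of \cite{R10} directly for $\K$; the long-range matching of $\K$ with $1/|\cdot|$ ensures the same threshold $18/7$.
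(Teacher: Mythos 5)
Your proof is correct, but it follows a genuinely different route from the paper's. The paper never compares $V$ with the Coulomb energy $D$: it uses its own lower bound for the Bopp--Podolsky energy (Theorem \ref{Thm1}) to embed $\E_r$ continuously into the weighted radial space $H_{W,r}(\R^3)$ with $W(x)=(1+|x|^2)^{-1/4}(1+|\log|x||)^{-\alpha}$, then invokes the Su--Wang--Willem embedding theorem \cite{SWW07} with parameters depending on a small $\eps>0$, reaching the range $\left[\frac{18+4\eps}{7-2\eps},6\right]$ and hence, by arbitrariness of $\eps$, all of $(18/7,6]$; compactness is then obtained by arguing as in the proof of Theorem 1.2 of \cite{R10}. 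You instead delegate the sharp threshold to Ruiz wholesale: since $\mathcal{C}=\K+\mathcal{Y}$ pointwise and all three kernels are nonnegative, Tonelli gives $D(u^2,u^2)=V(u^2,u^2)+\irt\irt \mathcal{Y}(x-y)u^2(x)u^2(y)\,dx\,dy$ in $[0,+\infty]$, and Young's inequality together with the $L^4$ bound from Theorem \ref{tE36} (available without circularity, as it precedes this theorem) absorbs the Yukawa term, yielding $\|u\|_{E_r}\le C\|u\|_{\E}$ for the linear inclusion into Ruiz's space $E_r$; composing with his Theorem 1.2 gives the continuous embedding. This is sound, and in fact yields the cleaner structural statement that $\E_r=E_r$ with equivalent norms, since $\K\le\mathcal{C}$ gives the reverse inclusion for free. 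Your self-contained compactness argument (Strauss decay $|u_n(x)|\le C|x|^{-1/2}\|\nabla u_n\|_2$ for radial $D^{1,2}$ functions, interpolation of the tails against a uniform subcritical $L^\sigma$ bound with $\sigma\in(18/7,\tau)$, and Rellich--Kondrachov on balls) is also correct, though once the continuous inclusion $\E_r\hookrightarrow E_r$ is established you could obtain compactness immediately by composing it with Ruiz's compact embedding. What each approach buys: yours is shorter and bypasses both Theorem \ref{Thm1} and \cite{SWW07} for this particular result; the paper's is self-contained at the level of the Bopp--Podolsky energy, and its Theorem \ref{Thm1} --- which quantifies precisely how $\K$ differs from $\mathcal{C}$ at short distances --- is needed anyway elsewhere (e.g.\ for Proposition \ref{prop:embedE}), so the weighted-space route earns its keep beyond this embedding.
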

\begin{rem}
Since $E_r\subset\mathcal{E}_r$, by \cite[Theorem 1.2]{R10}, the embedding we found is optimal.
\end{rem}
A crucial step for the previous theorem is the following general lower bound for the Bopp-Podolsky energy.
    \begin{thm}\label{Thm1}
		Given $\alpha>1/2$, there exists a constant $C=C(a,\alpha)>0$ such that, for any measurable function $u:\R^3\to\R$ 
 there holds
		\begin{equation*}
			     V(u^2,u^2)\ge
   C \left(\int_{\R^3} \frac{u^2(x)}{(1+|x|^2)^{1/4}(1+|\log |x||)^\alpha} dx\right)^2.
		\end{equation*}
	\end{thm}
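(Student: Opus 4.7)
The approach is a duality argument based on Theorem~\ref{thmE}, which supplies both the identification $\|\phi_u\|_{\A}^{2} = 4\pi V(u^2,u^2)$ and the weak formulation $\langle \phi_u, g \rangle_{\A} = 4\pi \int_{\R^3} u^2 g\,dx$ for test functions $g \in C_c^\infty(\R^3)$ (extendible to $g \in \A$ whenever the right-hand side is well-defined, by density of $C_c^\infty$ in $\A$). Writing $w(x) := (1+|x|^2)^{1/4}(1+|\log|x||)^\alpha$, the plan is to exhibit a single $g \in \A$ which pointwise dominates $c'/w$; then one application of Cauchy--Schwarz in $\A$ delivers the theorem.

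Concretely, I would take the smooth radial test function
\[
g(x) := \bigl(1+|x|^2\bigr)^{-1/4}\bigl[1+\log(1+|x|^2)\bigr]^{-\alpha}.
\]
Standard asymptotics at infinity give $g(r) \sim r^{-1/2}(\log r)^{-\alpha}$, whence $|\nabla g(x)|^2|x|^2 \sim |x|^{-1}(\log|x|)^{-2\alpha}$ and thus $\int_{\R^3}|\nabla g|^2\,dx < \infty$ precisely when $\alpha > 1/2$---this is where the hypothesis plays its essential role. The integral $\int_{\R^3}|\Delta g|^2\,dx$ converges unconditionally, since $|\Delta g|^2|x|^2 \sim |x|^{-3}(\log|x|)^{-2\alpha}$ at infinity. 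As $g$ is smooth and bounded near the origin, the local contributions are harmless, so $g\in\A$. Moreover, the product $g(x)\,w(x) = [(1+|\log|x||)/(1+\log(1+|x|^2))]^\alpha$ is continuous and strictly positive on $\R^3$, tends to $+\infty$ as $|x|\to 0$ and to $(1/2)^\alpha$ as $|x|\to\infty$; consequently there exists $c' = c'(\alpha) > 0$ with $g(x) \ge c'/w(x)$ for every $x\in\R^3$.

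For the main bound, I may assume $V(u^2,u^2) < \infty$ (the remaining case being trivial), and introduce smooth truncations $u_R := u\,\eta_R$ with $\eta_R$ a standard cutoff at scale $R$. Then $u_R\in\E$ and $u_R^2 \in L^1(\R^3)$, which justifies extending the weak identity to the test function $g\in\A$: approximating $g$ by $\varphi_n \in C_c^\infty$ in $\A$ and using $\A\hookrightarrow L^\infty$ gives $\int u_R^2\,\varphi_n \to \int u_R^2\,g$. Cauchy--Schwarz in $\A$ then yields
\[
4\pi\int_{\R^3} u_R^2 g\,dx = \langle \phi_{u_R}, g\rangle_{\A} \le \|\phi_{u_R}\|_{\A}\,\|g\|_{\A} = \sqrt{4\pi V(u_R^2,u_R^2)}\,\|g\|_{\A},
\]
and combined with $g \ge c'/w$ this rearranges to
$\bigl(\int u_R^2/w\bigr)^2 \le V(u_R^2,u_R^2)\,\|g\|_\A^2/\bigl(4\pi(c')^2\bigr)$. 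Monotone convergence as $R\to\infty$ transfers the estimate to $u$ itself, giving the theorem with $C := 4\pi(c')^2/\|g\|_\A^2$, a constant depending only on $a$ (through $\|g\|_\A$) and $\alpha$.

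The main obstacle is that the naive choice $g = 1/w$ does \emph{not} belong to $\A$: its Laplacian carries a non-integrable logarithmic singularity at the origin (from differentiating the $(1+|\log|x||)^{-\alpha}$ factor). The remedy is the smooth surrogate $[1+\log(1+|x|^2)]^{-\alpha}$, which agrees asymptotically with the original at infinity---where the borderline hypothesis $\alpha > 1/2$ enters to secure $\|g\|_\A < \infty$---but is harmlessly regularized at the origin, where $1/w$ vanishes anyway.
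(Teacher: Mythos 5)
Your proposal is correct, but it takes a genuinely different route from the paper's. The paper proves the (slightly more general) Theorem~\ref{thm3.1} for an arbitrary function $f\ge 0$ by a direct kernel estimate: it restricts the double integral to the region $|y|<2|x|<4|y|$, shows $\K(x-y)\geq C$ there when $|y|<1$ and $\K(x-y)\,|x|^{1/2}|y|^{1/2}\geq C$ when $|y|\ge 1$, reduces to a one-dimensional integral of spherical means, and invokes Lemma~3.2 of \cite{R10}; the weight, including the threshold $\alpha>1/2$, emerges from that 1D lemma. You instead run a duality argument through the functional framework of Section~\ref{Se2}: test the weak equation satisfied by the potential against an explicit $g\in\A$ dominating a multiple of $1/w$, and apply Cauchy--Schwarz together with the energy identity $\|\phi_u\|_\A^2=4\pi V(u^2,u^2)$. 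Your verifications check out: $\nabla g\in L^2$ exactly when $\alpha>1/2$, $\Delta g\in L^2$ unconditionally (you could also record $g\in L^6$, needed for the paper's definition of $D^{1,2}(\R^3)$, which holds since $\alpha>1/6$), the ratio $g\,w$ is bounded below by a positive constant, the constant bookkeeping $C=4\pi(c')^2/\|g\|_\A^2$ is right, and your diagnosis of why the naive choice $g=1/w$ fails ($\Delta(1/w)\notin L^2$ near the origin, and in fact also the kink of $|\log|x||$ at $|x|=1$) is sound. What each approach buys: yours is shorter, makes structurally transparent where $\alpha>1/2$ enters (finiteness of $\|\nabla g\|_2$), and yields the general principle that $V(f,f)\ge C\left(\int_{\R^3} f h\,dx\right)^2$ for \emph{any} weight $h$ pointwise dominated by an $\A$-function; the paper's proof is self-contained at the level of the kernel, independent of the PDE machinery of Section~\ref{Se2}, and parallels \cite{R10} directly.

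One step needs repair, though the fix is immediate. For a general measurable $u$, the truncation $u_R:=u\,\eta_R$ does \emph{not} belong to $\E$ (it need not lie in $D^{1,2}(\R^3)$ at all), so you cannot invoke Theorem~\ref{thmE} for $u_R$. But everything you actually use about $u_R$ --- that $\phi_{u_R}=\K*u_R^2\in\A$, the weak formulation, and the energy identity --- is precisely the content of Theorem~\ref{mainplus} applied to $f=u_R^2\in\E_1^+$: indeed $V(u_R^2,u_R^2)\le V(u^2,u^2)<+\infty$ by positivity of $\K$, and $u_R^2\in L^1(\R^3)$ by Lemma~\ref{l1loc} combined with compact support, which justifies your extension of the weak identity to $g\in\A$ via density of $C_c^\infty(\R^3)$ in $\A$ and $\A\hookrightarrow L^\infty(\R^3)$. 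With that citation the smoothness of the cutoff is irrelevant (the characteristic function of $B_R$ works, as in the paper's own proof of Theorem~\ref{mainplus}), and the remainder of your argument, including the monotone convergence passage $R\to+\infty$, goes through verbatim; run with a general $f\ge 0$ in place of $u^2$, it even recovers the paper's stronger Theorem~\ref{thm3.1}.
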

The above results are analogous to \cite{R10}, but 
while the properties of the Coulomb energy $D$ are well studied in the literature, still very few is known for the Bopp-Podolsky energy $V$ which creates several challenging differences. 
First of all, in \cite[Theorem 9.8]{LieLos01} it is proved that $D(f,f)$ is positive for any non-zero measurable function $f$ and this is a crucial step in \cite{R10} in order to construct a norm of the space $E$. The arguments in \cite{LieLos01} do not work for $V$ and a new proof is required. We also observe that it does not follow immediately from the positivity of the Fourier transform of $\mathcal{K}$ (see Remark \ref{kappahat} for details).
Moreover, due to the lack of homogeneity of $\K$, standard scaling arguments cannot be applied. Finally we remark that the inequality in Theorem~\ref{Thm1} is slightly different from the corresponding one in \cite[Theorem 1.1]{R10}. This is strictly related to the different behaviour of $\K$ and $\mathcal{C}$ at the origin and reflects the fact that the Bopp-Podolsky theory is  an effective theory at short distances,  while  it is indistinguishable from the Maxwell one at  large distances.

The second type of results deals with the existence of weak solutions to the Schr\"{o}dinger-Bopp-Podolsky system \eqref{eq1.1}, namely functions $u\in \E$ satisfying
		$$\int_{\R^3}\nabla u \cdot\nabla \varphi dx+q^2 \int_{\R^3}\phi_{u}u\varphi dx=\int_{\R^3}|u|^{p-2}u\varphi dx,\qquad \text{for all } \varphi\in\E.$$
	\begin{thm}\label{Thm2}
		Let $q\ne 0$. For any $p\in (3,6)$, there exists a non-trivial weak solution in $\E$ (or $\E_r$) to system \eqref{eq1.1}.  
		\end{thm}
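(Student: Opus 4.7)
The plan is to find a nontrivial critical point of the $C^1$ energy functional
\[
I(u) = \frac{1}{2}\|\nabla u\|_2^2 + \frac{q^2}{4} V(u^2, u^2) - \frac{1}{p}\|u\|_p^p
\]
on the radial subspace $\mathcal{E}_r$; by the principle of symmetric criticality this yields a weak solution in $\mathcal{E}$. That $I$ is $C^1$ on $\mathcal{E}_r$ with critical points solving \eqref{eq1.1} follows from the continuous embedding $\mathcal{E} \hookrightarrow L^p(\R^3)$ of Theorem \ref{tE36} together with Theorem \ref{thmE}, which identifies the nonlocal term as $V(u^2,u^2) = \int_{\R^3}\phi_u u^2\,dx$ and provides the regularity of the map $u \mapsto \phi_u$.

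The structural obstacle to a direct variational attack is that the usual combination used to bound Palais--Smale sequences,
\[
I(u) - \tfrac{1}{p}\langle I'(u),u\rangle = \left(\tfrac{1}{2}-\tfrac{1}{p}\right)\|\nabla u\|_2^2 + q^2\left(\tfrac{1}{4}-\tfrac{1}{p}\right)V(u^2,u^2),
\]
has a non-positive coefficient in front of $V$ when $p \in (3,4]$. I would therefore perturb by a mass term: for $\omega>0$, consider
\[
-\Delta u + \omega u + q^2 \phi_u u = |u|^{p-2}u
\]
with energy $I_\omega(u) = I(u) + \frac{\omega}{2}\|u\|_2^2$ defined on $H^1_r(\R^3)$, a setting in which the compact embedding $H^1_r \hookrightarrow L^p$ for $p\in(2,6)$ is classical. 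Verify that $I_\omega$ has the standard mountain pass geometry, and apply the mountain pass theorem combined with the Pohozaev identity for the Bopp--Podolsky system (cf.\ \cite{DS19}), or alternatively Jeanjean's monotonicity trick, to obtain a nontrivial radial solution $u_\omega \in H^1_r$ at a mountain pass level $c_\omega$ that admits a uniform upper bound on $\omega\in(0,1]$ by testing with a fixed $\omega$-independent path.

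Combining the Nehari and Pohozaev identities for $u_\omega$ with the uniform upper bound on $c_\omega$ then yields bounds on $\|\nabla u_\omega\|_2$, $\|u_\omega\|_p^p$, $V(u_\omega^2, u_\omega^2)$, and $\omega\|u_\omega\|_2^2$ that are uniform as $\omega \to 0^+$, so $(u_\omega)$ is bounded in $\mathcal{E}_r$. Extracting a sequence $\omega_n \to 0^+$ with $u_{\omega_n} \rightharpoonup u$ in $\mathcal{E}_r$, the compact embedding $\mathcal{E}_r \hookrightarrow L^p$ of Theorem \ref{tEr187} allows us to pass to the limit in the nonlinear term; weak lower semicontinuity of the $\mathcal{A}$-norm applied to $\phi_{u_{\omega_n}} \rightharpoonup \phi_u$ keeps $u$ in $\mathcal{E}_r$, and $\omega_n\|u_{\omega_n}\|_2^2 \to 0$ kills the mass term, so the limit $u$ is a weak solution of \eqref{eq1.1}. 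The hardest point I expect is the nontriviality of $u$: one must prevent $u_{\omega_n}$ from vanishing in $L^p$, which should follow from a uniform positive lower bound on $c_\omega$ combined with the Nehari identity and Theorem \ref{Thm1}.
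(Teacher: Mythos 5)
Your overall architecture is the paper's: perturb by a vanishing mass, solve the perturbed problem by a Pohozaev-constrained variational method, extract uniform bounds from a Nehari--Pohozaev combination together with a monotone upper bound on the levels, and pass to the limit. (The paper takes the perturbed ground states ready-made from \cite{CLRT22} rather than re-running mountain pass/Jeanjean, and treats the non-radial case via Lions' lemma \cite{L84} and translations instead of working radially; these are organizational rather than substantive differences.) Your boundedness step is sound: the combination $I_\eps(u)-\frac{1}{2p-3}\big(2I_\eps'(u)[u]-P_\eps(u)\big)$ has all positive coefficients for $p>3$, which is exactly \eqref{eq3.26}, and your upper bound $c_\omega\le c_1$ via a fixed path is fine since $I_\omega\le I_1$ pointwise.

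The genuine gap is the non-vanishing step, which you correctly flag as the hardest point but whose resolution you misattribute. The ``uniform positive lower bound on $c_\omega$'' is essentially equivalent to what you need and is nowhere derived, and Theorem \ref{Thm1} cannot supply it: Theorem \ref{Thm1} only feeds the embedding $\E_r\hookrightarrow L^p$, i.e. $\|u\|_p^p\le C\big(\|\nabla u\|_2^2+V(u^2,u^2)^{1/2}\big)^{p/2}$, and inserting this into the Nehari identity $\|\nabla u_\omega\|_2^2+\omega\|u_\omega\|_2^2+q^2V(u_\omega^2,u_\omega^2)=\|u_\omega\|_p^p$ gives, with $s=\|u_\omega\|_{\E}^2$, only an inequality of the type $c\min(s,s^2)\le Cs^{p/2}$, which for $p\in(3,4]$ is perfectly consistent with $s\to 0$; vanishing is not excluded. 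The paper's mechanism is finer on two counts. First, it uses the scaled Nehari--Pohozaev functional $J_\eps$ together with inequality \eqref{eq3.33} to get \eqref{eq3.32}, namely $\|u_\eps\|_p^p\ge C\big(\|\nabla u_\eps\|_2^2+V(u_\eps^2,u_\eps^2)\big)$. Second --- and this is the key ingredient missing from your sketch --- it invokes the product estimate \eqref{L3} of Proposition \ref{pr3}, $\|u\|_3^3\le \frac1\pi\|\phi_u\|_{\A}\|\nabla u\|_2=CV(u^2,u^2)^{1/2}\|\nabla u\|_2$, which via the interpolation $\|u\|_p\le\|u\|_3^{\lambda}\|u\|_6^{1-\lambda}$ yields $\|u_\eps\|_p^p\le C\big(\|\nabla u_\eps\|_2^2+V(u_\eps^2,u_\eps^2)\big)^{(2p-3)/3}$ with exponent $(2p-3)/3>1$ for \emph{all} $p>3$ (computation \eqref{eq3.34}). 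Comparing the two bounds gives \eqref{ba0}, $\|\nabla u_\eps\|_2^2+V(u_\eps^2,u_\eps^2)\ge C>0$, hence $\|u_\eps\|_p\ge C$ and $m_\eps\ge C$; the essential point is that $\|u\|_3$ is controlled by the geometric mean $V^{1/4}\|\nabla u\|_2^{1/2}$, strictly stronger than anything the $\E$-norm (where $V$ enters as $V^{1/2}$) can give, and without it your limit could be zero. Two minor repairs: from \eqref{eq3.26} one gets that $\omega\|u_\omega\|_2^2$ is \emph{bounded}, not $o(1)$, but in the weak formulation $|\omega_n\int u_n\varphi\,dx|\le\omega_n\|u_n\|_6\|\varphi\|_{6/5}\to0$ suffices; and the passage to the limit in $\int\phi_{u_n}u_n\varphi\,dx$ needs Lemma \ref{lem3.4}(\ref{weakconv}) plus local compactness, while symmetric criticality on the (non-Hilbert, uniformly convex) space $\E$ is avoidable altogether by testing the limit equation against all $\varphi\in C_c^\infty(\R^3)$ and concluding by density (Proposition \ref{density}), as the paper does.
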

For the proof we use an approximation procedure adding a small mass term, using some results contained in \cite{CLRT22,DS19} for the positive mass case, and passing to the limit. Some uniform estimates are necessary. Even if, in the radial setting, our space compactly embeds into $L^\tau(\RT)$, for any $\tau\in (18/7,6)$, due to the lack of homogeneity of our nonlocal term, we get the existence result only for $p\in(3,6)$. In contrast, in \cite{R10}, the case $p\in (18/7,3)$ is also considered but, therein, the scaling properties of the Coulombian energy nonlocal term play a crucial role.

This paper is organized as follows. In Section \ref{Se2} we establish some new results about the Bopp-Podolsky potential
equation, arriving to prove Theorem \ref{thmE}.
In Section \ref{Se3} we study the Bopp-Podolsky energy $V$ starting from preliminary results that allow us also to get Theorem~\ref{Thm1}.
In Section \ref{Se4} we analyse the functional space $(\E,\|\cdot\|_{\E})$ obtaining the embedding Theorems \ref{tE36} and \ref{tEr187}.
Finally, in Section \ref{Se5}, we focus on the regularity of the energy functional and we prove Theorem \ref{Thm2}.
	
We conclude with a list of notations:
	\begin{itemize}
		\item $\|\cdot\|_q$ denotes the usual $L^q(\R^3)$ norm for $1\le q\le +\infty$;
		\item let $B_{R}(x_0)$ denote the ball centered at $x_0$ with radius $R$; if $x_0=0$, we simply write $B_R$;
		\item $C$ is used to denote suitable positive constants whose value may be distinct from line to line;
        \item $o_n(1)$ indicates a vanishing sequence as $n\to+\infty$;
		\item the subscript $r$ denotes the respective subspace of radial functions.
	\end{itemize}
Moreover  all the considered functions are measurable.
\section{The electrostatic potential equation}\label{Se2}
We begin with a key proposition about the Bopp-Podolsky fundamental solution (see also \cite[Lemma 3.3]{DS19}).

Observe that, if $L$ is a linear operator and $u$ and $v$ satisfy in distributional sense $L(u)=\delta$ and $(L+k\operatorname{Id})(v)=\delta$, for $k\in\R\setminus\{0\}$, then
    \[
    \frac{1}{k} L\circ(L+k\operatorname{Id})[u-v]
    =
    \frac{1}{k} L(\delta + ku - \delta)
    =
    \delta.
    \]
In particular, in our case, by \cite[Theorems 6.20]{LieLos01} and \cite[Th. 6.23 with Remark (2)]{LieLos01}, we get
\begin{prop}\label{GreenDelta3}
		Let $x,y\in\R^3$ with $x\neq y$, $a>0$, and 
    \[
    \mathcal{C}_y(x)= \frac{1}{|x-y|}\quad
    \text{ and }\quad
    \mathcal{Y}_y(x)= \frac{e^{-\frac{|x-y|}{a}}}{|x-y|},
    \]
  be, respectively, the Coulomb and the Yukawa potentials in $\R^3$, namely, the solutions of 
$$-\Delta \mathcal C_y = 4\pi\delta_y, \ \ -\Delta \mathcal Y_y +\frac{1}{a^2}\mathcal Y_y = 4\pi\delta_y , \qquad\text{in }\RT,$$
 in distributional sense.
        Then 
    \[
    \mathcal{K}_y(x)=\mathcal{C}_y(x) - \mathcal{Y}_y(x)
=\frac{1-e^{-\frac{|x-y|}{a}}}{|x-y|}
    \]
  solves the equation $$-\Delta \K_y+a^2\Delta^2\K_y=4\pi \delta_y, \qquad \text{in } \RT,$$ in distributional sense.
	\end{prop}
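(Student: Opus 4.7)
The plan is to apply, essentially verbatim, the abstract operator identity recorded in the paragraph just above the statement. First, I would verify that that identity remains valid when the right-hand side is $4\pi\delta_y$ instead of $\delta_y$: if $Lu = 4\pi\delta_y$ and $(L+k\operatorname{Id})v = 4\pi\delta_y$ with $k\neq 0$, then
\[
(L+k\operatorname{Id})(u-v) = (Lu+ku) - 4\pi\delta_y = ku,
\]
so that $\frac{1}{k}\,L\circ(L+k\operatorname{Id})[u-v] = Lu = 4\pi\delta_y$. The factor $4\pi$ simply passes through because the identity is linear in the common source term.

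I would then specialize to $L:=-\Delta$, $k:=1/a^2$, $u:=\mathcal{C}_y$ and $v:=\mathcal{Y}_y$. The distributional identities $-\Delta\mathcal{C}_y = 4\pi\delta_y$ and $-\Delta\mathcal{Y}_y + \frac{1}{a^2}\mathcal{Y}_y = 4\pi\delta_y$ are exactly the content of the two Lieb--Loss references already cited by the authors (\cite[Theorem 6.20]{LieLos01} and \cite[Theorem 6.23, Remark (2)]{LieLos01}). A direct expansion of the composition gives
\[
\frac{1}{k}\,L\circ(L+k\operatorname{Id}) = \frac{1}{k}L^2 + L = a^2\Delta^2 - \Delta,
\]
which is precisely the Bopp--Podolsky operator appearing in the statement. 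Substituting $u-v=\mathcal{C}_y-\mathcal{Y}_y=\mathcal{K}_y$ therefore yields $-\Delta\mathcal{K}_y + a^2\Delta^2\mathcal{K}_y = 4\pi\delta_y$ in $\mathcal{D}'(\R^3)$, as claimed.

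No genuine obstacle is anticipated; the argument is purely distributional algebra. The one detail worth recording is that all the compositions are well defined because $\mathcal{K}_y$ is locally integrable on $\R^3$---in fact bounded near $y$, since $\mathcal{K}_y(y+h)\to 1/a$ as $h\to 0$---and decays at infinity, so that $\Delta\mathcal{K}_y$ and $\Delta^2\mathcal{K}_y$ make sense as distributions on $\R^3$ and the linear identity above can be invoked termwise.
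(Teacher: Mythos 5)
Your argument is correct and is essentially identical to the paper's own proof: the authors establish the same abstract identity $\frac{1}{k}L\circ(L+k\operatorname{Id})[u-v]=\delta$ for $L(u)=\delta$, $(L+k\operatorname{Id})(v)=\delta$ in the paragraph preceding the proposition, then specialize to $L=-\Delta$, $k=1/a^2$ using the same two Lieb--Loss results. Your extra observations (the factor $4\pi$ passing through by linearity, and $\mathcal{K}_y\in L^1_{\mathrm{loc}}$ so that all terms are well-defined distributions) are sound and merely make explicit what the paper leaves implicit.
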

\begin{rem}
Note that the above arguments establish a further relation between the Coulomb and Yukawa potentials. Indeed,  if we consider, for simplicity, the $\delta$
distribution centered in $0$, denoting $\mathcal{C}:=\mathcal{C}_0$ and $\mathcal{Y}:=\mathcal{Y}_0$,
then $\mathcal K =\mathcal C-\mathcal Y$ satisfies
$$-\Delta \mathcal K 
= \frac{1}{a^2}\mathcal Y \qquad\text{and}\qquad
-\Delta\mathcal K  +\frac{1}{a^2}\mathcal K 
= \frac{1}{a^2}\mathcal C$$
suggesting that, at least formally, respectively,
\begin{equation*}
   \mathcal K= \frac{1}{a^2}\mathcal{C}*\mathcal{Y}
\qquad\text{and}\qquad
\mathcal K=\frac{1}{a^2}\mathcal{Y}*\mathcal{C} 
\end{equation*}
which make sense due to the commutativity of the convolution.
This can be obtained rigorously by explicitly computing the convolution. In fact, passing 
in spherical coordinates and choosing the $z$-axis with the same direction of $x\neq0$, 
$y=(r, \theta,\vartheta)$
with $r\ge 0, \theta\in[0,\pi], \vartheta\in[0,2\pi]$ and with $R=|x|$,
we have
\begin{eqnarray*}
     \int_{\mathbb R^3} \frac{e^{-|y|/a}}{|x-y| |y|}dy &=&
    \int_{0}^{+\infty} \int_{0}^{\pi} \int_{0}^{2\pi}
    \frac{e^{-r/a}}{\sqrt{R^2+r^2-2Rr \cos\theta}}r\sin\theta d\vartheta d\theta  dr\\
    &=& 2\pi\int_{0}^{+\infty}\int_{0}^{\pi}\frac{e^{-r/a}}{\sqrt{R^2+r^2-2Rr \cos\theta}}r\sin\theta d\theta dr\\
    &=&2\pi\int_{0}^{+\infty}\int_{(r-R)^2}^{(r+R)^2}
    \frac{e^{-r/a}}{s^{1/2}} \frac{ds}{2R}dr\\
    &=&\frac{\pi}{R}\int_{0}^{+\infty} e^{-r/a} 2[(r+R) - |r-R|] dr\\
    &=& \frac{4\pi}{R}\int_{0}^{R}e^{-r/a}r dr + 4\pi\int_{R}^{+\infty}e^{-r/a}dr= 4\pi a^2 \frac{1-e^{-R/a}}{R},
\end{eqnarray*}
where we used the change of variable $s=R^2+r^2-2Rr\cos\theta$.
\end{rem}
Since the arguments below do not depend on the particular value of $a>0$, for notational simplicity we set $a=1$.

Proposition~\ref{GreenDelta3} allows us to represent solutions of the equation  
        \begin{equation}\label{BPeq}
			-\Delta v+\Delta^2 v=4\pi f\qquad \text{in $\RT$} 
		\end{equation} 
for a suitable function $f$.
We say that $v$ satisfies \eqref{BPeq} in weak sense if, for any $\varphi\in C_c^\infty(\RT)$ we have
\[
\irt (\n v \cdot \n \varphi+\Delta v \Delta\varphi)dx=4\pi \irt f \varphi dx,
\]
whenever this is meaningful. Moreover we set
\[\psi_f := \K *f.\]
	\begin{thm}\label{solu}
		Let $f\geq 0$. If $\psi_f\in L^1_{\mathrm{loc}}(\R^3)$ then it solves the equation \eqref{BPeq}	
		in distributional sense.
	\end{thm}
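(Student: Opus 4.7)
The goal is to verify the distributional identity
\[
\int_{\R^3} \psi_f\,(-\Delta\varphi+\Delta^2\varphi)\,dx \;=\; 4\pi\int_{\R^3} f\,\varphi\,dx,
\qquad \forall\,\varphi\in C_c^\infty(\R^3),
\]
which by the assumption $\psi_f\in L^1_{\mathrm{loc}}(\R^3)$ is meaningful (the test function $-\Delta\varphi+\Delta^2\varphi$ is smooth and compactly supported). My plan is to insert the definition $\psi_f=\K*f$, exchange the order of integration via Fubini, and then reduce to the pointwise distributional statement for the fundamental solution $\K_y$ obtained in Proposition~\ref{GreenDelta3}.

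First I would fix $\varphi\in C_c^\infty(\R^3)$ with $\operatorname{supp}\varphi\subset B_R$, set $h:=-\Delta\varphi+\Delta^2\varphi$, and observe that $h\in C_c^\infty(\R^3)$, $\operatorname{supp} h\subset B_R$, and $M:=\|h\|_\infty<\infty$. Since $\K\geq 0$ and $f\geq 0$, Tonelli's theorem applies to the nonnegative integrand $\K(x-y)f(y)|h(x)|$, yielding
\[
\int_{\R^3}\!\int_{\R^3} \K(x-y)\,f(y)\,|h(x)|\,dy\,dx
\;\leq\; M\int_{B_R}\!\Bigl(\int_{\R^3}\K(x-y)f(y)\,dy\Bigr)dx
\;=\;M\int_{B_R}\psi_f(x)\,dx \;<\;+\infty,
\]
the last inequality being precisely the hypothesis $\psi_f\in L^1_{\mathrm{loc}}(\R^3)$. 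This justifies Fubini's theorem applied to $\K(x-y)\,f(y)\,h(x)$.

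Second, after swapping,
\[
\int_{\R^3}\psi_f(x)\,h(x)\,dx
\;=\;\int_{\R^3} f(y)\left(\int_{\R^3}\K(x-y)\bigl(-\Delta\varphi(x)+\Delta^2\varphi(x)\bigr)dx\right)dy.
\]
For each fixed $y\in\R^3$, the inner integral is the pairing of the distributional equation for $\K_y(\cdot)=\K(\,\cdot-y)$ (Proposition~\ref{GreenDelta3}, which states $-\Delta\K_y+\Delta^2\K_y=4\pi\delta_y$ in $\mathcal{D}'(\R^3)$) with the test function $\varphi$; hence it equals $4\pi\varphi(y)$. Substituting this back gives
\[
\int_{\R^3}\psi_f(x)\,(-\Delta\varphi(x)+\Delta^2\varphi(x))\,dx \;=\; 4\pi\int_{\R^3} f(y)\,\varphi(y)\,dy,
\]
which is exactly \eqref{BPeq} in the distributional sense.

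The only delicate point is the application of Fubini. The sign hypothesis $f\geq 0$ is essential here, since it allows one to bound the double integral of the absolute value of the kernel--source product by $M\int_{B_R}\psi_f\,dx$; without nonnegativity one would need absolute convergence of $\K*|f|$, which is not assumed. Everything else is bookkeeping: the inner distributional identity is exactly the content of Proposition~\ref{GreenDelta3}, and no further regularity of $\psi_f$ beyond $L^1_{\mathrm{loc}}$ is needed for the argument.
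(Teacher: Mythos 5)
Your proof is correct and follows essentially the same route as the paper: the paper's argument also reduces to Proposition~\ref{GreenDelta3} after an exchange of integrals, except that it packages the Fubini--Tonelli step (justified, as in your argument, by $f\geq 0$, $\K\geq 0$, and $\psi_f\in L^1_{\mathrm{loc}}(\R^3)$ against a bounded compactly supported function) into a separate Lemma~\ref{key} and applies it to $-\Delta\varphi$ and $\Delta^2\varphi$ individually, whereas you inline it for the combined test function $-\Delta\varphi+\Delta^2\varphi$. This is a purely presentational difference, and your identification of the nonnegativity of $f$ as the point making Tonelli available matches the paper's hypothesis structure exactly.
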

 \begin{rem}\label{remL1loc}
The above theorem can be extended  to any differential operator having a fundamental solution satisfying, with $f\geq 0$, the  integrability assumption there. 
Moreover, the proof of the theorem makes clear that $f$  defines ({\em a posteriori}) a distribution. Anyway, for our $\K$, it is clear that if $\psi_f(x)\in\R$, for at least one $x\in \RT$, then $f\geq 0$ is locally integrable. In fact, being
$\K\in C(\RT\setminus\{0\})$, continuously extendible  in $0$, and positive, if $K$ is a compact set in $\RT$, then 
\[
+\infty>\psi_f(x)=\int_K \K(x-y)f(y)dy \geq C\int_K f(y) dy.
\]
\end{rem}   
In order to prove Theorem~\ref{solu} we need the following   
\begin{lem}\label{key}
Let $h$ be a function such that  $\K*|h|$ be locally integrable and  $g\in L^\infty(\RT)$ with compact support. Then 
\[
	\int_{\RT}(\K*h)  g  dx =\int_{\RT} (\K*g)  h dx.
\]
\end{lem}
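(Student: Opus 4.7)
The plan is a standard Fubini-Tonelli argument exploiting the symmetry of the kernel $\K$ (which depends only on $|x-y|$ and is therefore even), with the sole nontrivial point being to verify absolute integrability of the double integral so that the order of integration may be exchanged.

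First I would write out the left-hand side as a double integral:
\[
\int_{\RT}(\K*h)(x)\, g(x)\, dx = \int_{\RT}\int_{\RT} \K(x-y)\, h(y)\, g(x)\, dy\, dx,
\]
with the understanding that this needs to be justified. To apply Fubini's theorem, I would verify that the integrand is absolutely integrable on $\RT\times\RT$. Let $K:=\operatorname{supp} g$, which is compact by hypothesis, and let $M:=\|g\|_\infty<+\infty$. Then
\[
\int_{\RT}\int_{\RT} \K(x-y)\, |h(y)|\, |g(x)|\, dy\, dx
\le M \int_{K} \left(\int_{\RT} \K(x-y)\, |h(y)|\, dy\right) dx
= M \int_{K} (\K*|h|)(x)\, dx,
\]
and this last quantity is finite by the hypothesis that $\K*|h|$ is locally integrable together with the compactness of $K$.

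With absolute integrability in hand, Fubini's theorem allows me to exchange the order of integration. Using the fact that $\K(z)=(1-e^{-|z|})/|z|$ is an even function of $z$, so that $\K(x-y)=\K(y-x)$, I then rewrite
\[
\int_{\RT}\int_{\RT}\K(x-y)\,h(y)\,g(x)\,dx\,dy
= \int_{\RT} h(y) \left(\int_{\RT}\K(y-x)\,g(x)\,dx\right) dy
= \int_{\RT}(\K*g)(y)\,h(y)\,dy,
\]
which gives the claim. The only genuine obstacle, aside from bookkeeping of symmetry, is the Fubini justification, and this is precisely what the assumption that $\K*|h|$ is locally integrable provides, coupled with $g$ being bounded and compactly supported; no further regularity or decay on $h$ is needed.
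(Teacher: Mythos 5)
Your proof is correct and follows essentially the same route as the paper: both verify absolute integrability of the double integral using the boundedness and compact support of $g$ together with the local integrability of $\K*|h|$, then apply Fubini--Tonelli and the symmetry $\K(-x)=\K(x)$. Your write-up is in fact slightly more explicit than the paper's in spelling out the bound $M\int_K(\K*|h|)\,dx<+\infty$, which the paper only asserts.
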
	
\begin{proof}
Since 
\[
	\int_{\RT} \left( \int_{\RT} \K(x - y) |h(y)| \, dy \right)| g(x) |\, dx<+\infty, 
\]
and $\K>0$, by Fubini-Tonelli theorem we get
	\begin{align*}
	\int_{\RT}(\K*h)  g  dx&=\int_{\RT} \left( \int_{\RT} \K(x - y) h(y) \, dy \right) g(x)  dx\\
	&	 = \int_{\RT} h(y) \left( \int_{\RT} \K(x - y) g(x)  dx \right) dy\\
	&= \int_{\RT} h(y) \left( \int_{\RT}  \K(y-x)g(x)  dx \right) dy= \int_{\RT} (\K*g)  h dx
\end{align*}
where we have used $\K(-x)=\K(x)$.
\end{proof}
Thanks to the above lemma we have
\begin{proof}[Proof of Theorem~\ref{solu}]
By Lemma~\ref{key} and Proposition~\ref{GreenDelta3}, for any test function $\varphi \in C_c^\infty(\RT)$ we get
		\begin{align*}
			-\int_{\RT} \psi_f\Delta \varphi dx 
            +\int_{\RT} \psi_f\Delta^2 \varphi dx
            &= -\int_{\RT}
            (\mathcal{K}*f)\Delta \varphi dx +
            \int_{\RT}
            (\mathcal{K}*f)\Delta^2 \varphi dx 
            \\
            &= \int_{\RT} f \cdot \K * (-\Delta \varphi) dx
            + \int_{\RT} f \cdot \K * (\Delta^2 \varphi) dx\\
            &=\int_{\RT} f \cdot \Big(\int_{\RT} \K(x-y)  \big(-\Delta \varphi (y)+\Delta^2 \varphi(y)\big) dy \Big)dx\\
            &=4\pi \int_{\RT} f \varphi dx.
		\end{align*}
    \end{proof}
Let us also define the following spaces
\[\begin{aligned}
	\mathcal E_1^+ &:= \left\{f:\R^3\to \R: f\geq 0\text{ and }V(f,f)< +\infty\right\},\\
	\mathcal E_2^+ &:= \left\{f: \R^3\to \R :  f\geq 0\text{ and }\psi_f \in \mathcal A\right\}.
\end{aligned}\]
Theorem \ref{thmE} is an immediate consequence of the following result.
\bt\label{mainplus}
The spaces $\mathcal E_1^+$ and $\mathcal E_2^+$ coincide. 
Moreover, for any $f \in \mathcal E_2^+$, $\psi_f$  satisfies \eqref{BPeq}
in weak sense  and the following energy identity holds
\beq\label{energyidentity}\int_{\mathbb{R}^3} (|\nabla \psi_f|^2 + |\Delta \psi_f|^2) dx = 4\pi \int_{\R^3} \psi_f f dx.\eeq
\et
As an intermediate step toward the proof of Theorem~\ref{mainplus}, let us consider the case where $f\in L^{6/5}(\R^3)$ with compact support.
\begin{prop}\label{bounded}
	Let $f \in L^{6/5}(\R^3)$ with compact support. Then $\psi_f$
	belongs to $\mathcal A$, solves \eqref{BPeq} in weak sense, and satisfies \eqref{energyidentity}.
	\end{prop}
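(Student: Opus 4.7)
The approach I would take is to decompose the Bopp-Podolsky kernel via Proposition~\ref{GreenDelta3} as $\K = \mathcal{C} - \mathcal{Y}$, so that $\psi_f = u - v$ with $u:=\mathcal{C}*f$ and $v:=\mathcal{Y}*f$, and then treat each piece with classical elliptic tools before reassembling. The compact support of $f$ combined with $f\in L^{6/5}(\R^3)$ immediately yields $f\in L^1\cap L^{6/5}$ by H\"older, and in particular $f\in \dot H^{-1}(\R^3)\cap H^{-1}(\R^3)$ by duality with the Sobolev embeddings $\dot H^1\hookrightarrow L^6$ and $H^1\hookrightarrow L^6$.

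First I would handle $u=\mathcal{C}*f$: by Hardy--Littlewood--Sobolev (or equivalently by Riesz representation in $D^{1,2}$ applied to $f\in \dot H^{-1}$), $u$ lies in $D^{1,2}(\R^3)$, solves $-\Delta u = 4\pi f$ weakly, and satisfies the classical energy identity $\|\n u\|_2^2 = 4\pi\int uf\,dx$. Then I would handle $v=\mathcal{Y}*f$: since $-\Delta+I$ is an isomorphism from $H^1(\R^3)$ onto $H^{-1}(\R^3)$, $v$ lies in $H^1(\R^3)$, solves $-\Delta v+v=4\pi f$ weakly, and satisfies $\|\n v\|_2^2+\|v\|_2^2=4\pi\int vf\,dx$. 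Assembling these, $\n\psi_f=\n u-\n v\in L^2$ and $\psi_f\in L^6$ by Sobolev, so $\psi_f\in D^{1,2}$. Moreover, subtracting the two equations gives $\Delta\psi_f=\Delta u-\Delta v=-4\pi f-(v-4\pi f)=-v$, which lies in $L^2$; hence $\psi_f\in\A$.

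The weak form of \eqref{BPeq} would then follow by testing each equation against $\varphi\in C_c^\infty(\R^3)$: summing
\[
\int\n\psi_f\cdot\n\varphi\,dx = \int v\varphi\,dx, \qquad \int\Delta\psi_f\Delta\varphi\,dx = -\int v\Delta\varphi\,dx = \int\n v\cdot\n\varphi\,dx = 4\pi\int f\varphi\,dx-\int v\varphi\,dx,
\]
where the second chain uses an integration by parts legitimate since $v\in H^1$ and $\varphi\in C_c^\infty$. This yields the distributional identity in Theorem~\ref{solu}, and more: it holds against any $\varphi\in C_c^\infty$ in the weak sense required by \eqref{BPeq}.

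For the energy identity \eqref{energyidentity}, I would invoke the density of $C_c^\infty(\R^3)$ in $\A$ (stated in the introduction) to choose $\varphi_n\in C_c^\infty(\R^3)$ with $\varphi_n\to\psi_f$ in $\A$. Substituting $\varphi=\varphi_n$ into the weak formulation and applying Cauchy--Schwarz on the left, while using $\A\hookrightarrow L^6$ together with $f\in L^{6/5}$ on the right, gives
\[
\int(|\n\psi_f|^2+|\Delta\psi_f|^2)\,dx = \lim_{n\to\infty}4\pi\int f\varphi_n\,dx = 4\pi\int f\psi_f\,dx.
\]
The main delicate point I expect is the $H^1$ regularity of $v=\mathcal{Y}*f$: for $f$ merely in $L^{6/5}$ with compact support this is not immediate from Young's inequality alone and requires the embedding $L^{6/5}\hookrightarrow H^{-1}$ together with the isomorphism property of $-\Delta+I$. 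All remaining steps are routine convolution, duality, and density arguments.
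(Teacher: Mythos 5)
Your argument is correct, but it takes a genuinely different route from the paper. The paper never splits the kernel: it mollifies, setting $f_n = f*\rho_n \in C_c^\infty(\R^3)$, notes that $\psi_{f_n}$ solves the fourth-order equation classically, and obtains the energy identity on balls $B_R$ via two applications of Green's formula, killing the boundary terms through the explicit decay estimates \eqref{nablaK} and \eqref{deltaK} for $\nabla\K$, $\Delta\K$, $\nabla\Delta\K$ (this is where compact support is used crucially, to get $|x-y|\geq R-b$ on $\partial B_R$); uniform bounds $4\pi\|f_n\|_1^2$ then give weak compactness in $\A$, and $\|\psi_{f_n}-\psi_f\|_\infty \le \|f_n-f\|_1 \to 0$ identifies the limit. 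You instead exploit the decomposition $\K=\mathcal{C}-\mathcal{Y}$ of Proposition~\ref{GreenDelta3} and reduce everything to two classical second-order problems, getting $\Delta\psi_f=-v=-\mathcal{Y}*f$ for free (which nicely matches the paper's own Remark that $-\Delta\K=\mathcal{Y}$ when $a=1$). Your approach buys something real: it requires no mollification and no boundary estimates, and in fact it never uses the compact support of $f$ at all, since $L^{6/5}(\R^3)\hookrightarrow \dot H^{-1}\cap H^{-1}$ globally — so you actually prove the proposition for every $f\in L^{6/5}(\R^3)$. What the paper's approach buys is self-containedness and reusability: the same approximation-plus-Green's-formula scheme is recycled in the proof of Theorem~\ref{mainplus} for the monotone truncations of a general nonnegative $f$, where your linear-isomorphism argument would not apply. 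The one point you should make explicit is the identification of the convolutions with the variational solutions: that $\mathcal{C}*f$ coincides with the Riesz-representation solution requires knowing $\mathcal{C}*f\in L^6$ (Hardy–Littlewood–Sobolev) solves $-\Delta u=4\pi f$ distributionally and that an $L^6$ harmonic function vanishes (Liouville), and similarly $\mathcal{Y}*f$ must be matched with the $H^1$ solution by observing that a tempered solution of $(-\Delta+1)w=0$ is zero via the Fourier transform; both steps are routine but currently implicit in your sketch. The final density argument for \eqref{energyidentity} is identical to the paper's closing step, and your verification of the weak form of \eqref{BPeq} by summing the two subtracted identities is sound.
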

\begin{proof}
	Let $\rho \in C_c^\infty(\R^3)$ be a standard mollifier, with support in
    $B_1$, and set $\rho_n(x) = n^3\rho(nx)$, with $n\in \N$. Define
	\[f_n := f * \rho_n.\]
	Then, for every $n\in\N$, $f_n \in C_c^\infty(\R^3)$,  there exists $b>0$ such that $\bigcup_n \operatorname{supp}(f_n)\subset B_b$,
    and $f_n \to f$ in $L^\tau(\RT)$ for all $\tau \in [1,6/5]$, as $n\to +\infty$.
    Let 	
    \[\psi_n: = \psi_{f_n}=\K * f_n.\]
	Note  that, for each $n\in \N$, $\psi_n \in C^\infty(\mathbb{R}^3)$ and, being $\|\K\|_\infty\leq 1$, $\|\psi_n\|_\infty\leq \|f_n\|_1$. Moreover, as a consequence of Proposition \ref{GreenDelta3}, $\psi_n$ satisfies
	\[-\Delta \psi_n +\Delta^2\psi_n= 4\pi f_n\]
	in distributional sense and, actually,  classically.
	\\
	For any $R > 0$, 
    by Green's formula, we have
 \[\int_{B_R} |\nabla \psi_n|^2 dx = -\int_{B_R} \psi_n \Delta \psi_n dx + \int_{\partial B_R} \psi_n \partial_\nu \psi_n dS,\]
	where
	\[\partial_\nu \psi_n(x) = \int_{\R^3} \de_\nu\K(x-y) f_n(y)  dy.\]
	\\
If $R>b$, $x \in \partial B_R$, and $y \in \mathrm{supp}(f_n)$, we have $|x-y| \geq R - b$.
	Therefore,		for all $x\in \partial B_R$,
    \begin{equation}\label{psidecay}
		|\psi_n(x)| \leq \frac{1}{R-b} \|f_n\|_1.
	\end{equation}
	Moreover, since
	\beq\label{nablaK}
	\nabla \K(x)=\frac{x}{ |x|^3}\Big(e^{-|x|}(1+|x|)-1\Big)\qquad
    \text{ in } \R^3\setminus\{0\}
	\eeq
    and $-1\leq e^{-|x|}(1+|x|)-1\leq 0$, then, for all $x\in \partial B_R$,
		\[|\partial_\nu \psi_n(x)| \leq \frac{1}{(R-b)^2} \|f_n\|_1.\]
Thus,
	\[\left|\int_{\partial B_R} \psi_n \partial_\nu \psi_n dS\right| \leq \frac{4\pi R^2}{(R-b)^3} \|f_n\|_1^2\]
	which  vanishes as $R \to +\infty$.
	Moreover, by applying  the Green's formula, we get 		
	\beq\label{green2}\int_{B_R} |\Delta \psi_n|^2 dx = \int_{B_R} \psi_n \Delta^2 \psi_n dx - \int_{\partial B_R} \psi_n \partial_\nu \Delta \psi_n dS
	+ \int_{\partial B_R} \Delta \psi_n \partial_\nu \psi_n dS.\eeq	
	Since,
    \begin{equation}
        \label{deltaK}
    \Delta \K(x)=-\frac{e^{-|x|}}{|x|}
    \quad \text{and}\quad
 \nabla \Delta \K(x)=\frac{x}{|x|^3}e^{-|x|}(1+|x|),\qquad\text{ in } \R^3\setminus\{0\},
    \end{equation}
repeating the  reasoning  above,  also the boundary terms in \eqref{green2} go to zero as $R\to +\infty$.
	Therefore, we obtain
	\begin{equation}
		\label{psii}\int_{\R^3} \left(|\nabla \psi_n|^2+|\Delta \psi_n|^2\right) dx
        = \int_{\R^3} \psi_n\left(-\Delta\psi_n+\Delta^2 \psi_n\right) dx
        = 4\pi \int_{\R^3} \psi_n f_n dx\leq  4\pi \|f_n\|_1^2.
	\end{equation}
	From \eqref{psii},  $\nabla\psi_n\in L^2(\R^3)$ and $\Delta\psi_n\in L^2(\R^3)$ and then, taking into account  \eqref{psidecay},  
$\psi_n\in \mathcal A$.
	Since the right-hand side in \eqref{psii} is uniformly bounded, we deduce that $\{\psi_n\}$ is a bounded sequence in $\mathcal A$ and then, being $\A$  an Hilbert space, up to a subsequence, there exists $\xi\in \A$ such that
    $\psi_n \rightharpoonup \xi$ weakly in $\A$,
    as $n\to +\infty$. Moreover, since
\[
	\|\psi_n-\psi_f\|_\infty=\|\K*(f_n-f)\|_\infty  \le \|f_n-f\|_{1}\to 0, \qquad\text{as }n\to +\infty,
	\]
	then, up to a subsequence, $\psi_n\to \psi_f$ a.e. in $\RT$ and so $\psi_f=\xi\in\A$.
	\\
	Therefore, for any test function $\varphi \in C_c^\infty(\R^3)$, we have 
	\begin{equation*}
\begin{split}
    		\int_{\R^3} \left(\nabla \psi_f \cdot \nabla \varphi +\Delta \psi_f  \Delta \varphi\right) dx
		&= \lim_{n \to +\infty} \int_{\R^3} \left(\nabla \psi_n \cdot \nabla \varphi +\Delta \psi_n  \Delta \varphi\right) dx
        \\
        &= \lim_{n \to +\infty} 4\pi \int_{\R^3}  f_n \varphi dx= 4\pi \int_{\R^3}  f \varphi dx.
\end{split}
	\end{equation*}
	This shows that $\psi_f$ satisfies $-\Delta \psi+\Delta^2\psi=4\pi f$ in weak sense and, since $f\in L^{6/5}(\R^3)$, we get \eqref{energyidentity} by density of test functions in $\A$.
\end{proof}
\begin{lem}\label{l1loc}
If $f\ge 0$ is such that 
$V(f,f)<+\infty$, then $f\in L^{1}_{\mathrm{loc}}(\R^3)$.
\end{lem}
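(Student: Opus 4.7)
The plan is to argue by contradiction, exploiting the fact that $\K$ admits a strictly positive lower bound on any bounded region.

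First I would record the elementary fact that $\K$ is a positive, continuous, radial, non-increasing function of $|x|$ (continuously extended so that $\K(0)=1$). The radial monotonicity can be read off the expression for $\nabla\K$ in \eqref{nablaK}: setting $g(r)=e^{-r}(1+r)-1$ one has $g(0)=0$ and $g'(r)=-re^{-r}\le 0$, so $g\le 0$. Alternatively, one can simply note that $\K$ is positive and continuous on the compact set $\overline{B_{2R}}$ and therefore bounded below there by a strictly positive constant. Either way, for every $R>0$ and every $x,y\in B_R$ one has $|x-y|\le 2R$, hence
\[
\K(x-y)\;\ge\;c_R \;:=\; \K(2R)\;=\;\frac{1-e^{-2R}}{2R}\;>\;0.
\]

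Next, I would argue by contradiction: if $f$ were not locally integrable, then, since $f\ge 0$, there would exist $R>0$ with $\int_{B_R}f\,dx=+\infty$. Tonelli's theorem applied to the non-negative integrand $\K(x-y)f(x)f(y)$ on $B_R\times B_R$ would then yield
\[
V(f,f)\;\ge\;\int_{B_R}\int_{B_R}\K(x-y)\,f(x)\,f(y)\,dx\,dy\;\ge\; c_R\left(\int_{B_R}f\,dx\right)^{\!2}\;=\;+\infty,
\]
contradicting the hypothesis $V(f,f)<+\infty$. This forces $f\in L^{1}_{\mathrm{loc}}(\R^3)$.

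I do not anticipate any real obstacle: the only ingredient specific to the Bopp-Podolsky kernel is its positivity and continuity, and the remainder is the classical principle that a kernel bounded below by a positive constant on compact sets forces local integrability of any non-negative weight with finite energy. This is essentially the same viewpoint that already appears in Remark~\ref{remL1loc}, where the pointwise finiteness of $\psi_f$ at a single point was used to deduce local integrability of $f$; here the finiteness of the double integral $V(f,f)$ plays the analogous role.
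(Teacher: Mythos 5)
Your proposal is correct and is essentially the paper's own argument: both rest on the strictly positive lower bound of $\K$ on bounded sets together with Tonelli, yielding $V(f,f)\ge c_R\left(\int_{B_R}f\,dx\right)^2$ (the paper states this directly for an arbitrary compact $K$, while your contradiction framing is an equivalent repackaging). The additional verification of radial monotonicity via \eqref{nablaK} is harmless but unnecessary, since, as you yourself note, positivity and continuity of $\K$ on compact sets already give the lower bound.
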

\begin{proof}
Fix an arbitrary $K\subset \R^3$ compact, since 
    \[V(f,f)\geq\int_K\int_K \K(x-y)f(x)f(y)dx dy \geq 
C\left(\int_K f(x) dx\right)^2,\]
    we conclude that $f\in L^{1}_{\mathrm{loc}}(\R^3)$.
\end{proof}
We are ready to prove Theorem~\ref{mainplus}.
\begin{proof}[Proof of Theorem~\ref{mainplus}]
	Let us show first that $\mathcal E_1^+\subset \mathcal E_2^+$ and so take $f \in \E_1^+$. By Lemma \ref{l1loc}   we have that $f\in L^{1}_{\mathrm{loc}}(\R^3)$. Let us now consider the  approximating sequence
	$f_n := \chi_n  \min\{n,f\}$, where 	$\chi_n$ is the characteristic function of
    $B_n$. Hence, for any $n\ge 1$, we have that
	$0\leq f_n \leq n$, $f_n$ has compact support in $B_n$, and 
	$f_n\to f$ a.e. on $\R^3$. 
	Let    $\psi_n := \psi_{f_n}=\K * f_n$. 
By Proposition  \ref{bounded}, each $\psi_n$ satisfies in weak sense
\begin{equation}\label{eq4.1}
	-\Delta \psi_n + \Delta^2\psi_n = 4\pi f_n, \qquad\text{in }\RT,
    \end{equation}
	and
	\begin{equation}
	    \label{4.1.5}
        \int_{\R^3} (|\nabla \psi_n|^2 + |\Delta \psi_n|^2) dx =4\pi \int_{\R^3}\psi_n f_n dx. %= 
	\end{equation}
Since, for any $n\ge 1$, $0\le f_n\leq f_{n+1}\leq f$, and so $0\le \psi_n\le \psi_{n+1}\le \psi_f$,
    being $f\in \mathcal{E}_1^+$, we have 
    \begin{equation}\label{eq4.2}
	0\le\int_{\R^3}\psi_n f_n \leq \int_{\R^3}\psi_f f dx = V(f,f)<+\infty.
    \end{equation}
Then, by \eqref{4.1.5} $\{\psi_n\}$ is bounded in $\mathcal{A}$ and so we can extract a weakly convergent subsequence $\psi_n \rightharpoonup \xi$ in $\A$.
By taking the limit in the weak formulation for \eqref{eq4.1}, we  get   for any test function $\varphi \in C_c^\infty(\R^3)$
	\begin{equation*}
		\int_{\R^3} (\nabla \xi \cdot \nabla \varphi +\Delta \xi\Delta \varphi) dx =4\pi \int_{\R^3}  f \varphi dx.
	\end{equation*}
	This shows that $\xi$ satisfies \eqref{BPeq} in weak sense.
On the other hand 
	from the monotone convergence theorem 
    $\psi_n\to \psi_f$ a.e. in $\R^3$. Hence $\psi_f=\xi$ a.e. in $\R^3$,  therefore $\psi_f\in\A$ and so $f \in \E_2^+$.
	
To show that $\E_2^+ \subset \E_1^+$, let $f \in \E_2^+$. As $\psi_f\in \A$ then $\psi_f\in L^1_{\mathrm{loc}}(\R^3)$ and so, by Theorem~\ref{solu} (recall also Remark~\ref{remL1loc}),   
we get that $\psi_f$ satisfies \eqref{BPeq} in distributional sense  and, furthermore, for any $\varphi\in C_c^\infty(\mathbb{R}^3)$, we have
\[
 \int_{\mathbb{R}^3} (\nabla \psi_f \cdot \nabla \varphi +\Delta\psi_f \Delta \varphi)dx = 4\pi \int_{\mathbb{R}^3} f\varphi dx. 
\]
As $\psi_f\geq 0$,  arguing as in the proof of  \cite[Lemma 3.2]{DS19} there exists a sequence $\{\varphi_n\}\subset C_c^\infty(\mathbb{R}^3)$ such that $\varphi_n \geq 0$ for all $n$
	and $\varphi_n \to \psi_f$ in $\A$.
	Therefore, up to pass to a subsequence, $\varphi_n\to \psi_f$ a.e. and,  by Fatou's lemma, 
    \begin{equation}
        \label{end1}
        \begin{split}
           4\pi V(f,f)
           &=
           4\pi\int_{\mathbb{R}^3} f \psi_f dx
           \leq
           4\pi \lim_{n \to +\infty} \int_{\mathbb{R}^3} f\varphi_n dx\\
           &= \lim_{n \to +\infty} \int_{\mathbb{R}^3} (\nabla \psi_f \cdot \nabla \varphi_n +\Delta\psi_f \Delta \varphi_n)dx
           =           \int_{\mathbb{R}^3} (|\nabla \psi_f|^2+|\Delta\psi_f|^2) dx, 
        \end{split}
    \end{equation}
	i.e. $f\in \E_1^+$. 
    
To conclude, we observe that we showed above that, if $f\in \mathcal{E}_1^+$, then
$\psi_f$ solves \eqref{BPeq} in weak sense. Moreover, to get \eqref{energyidentity}, we can use again a sequence $\{f_n\}$ and the corresponding $\{\psi_n\}$ as above. By using the  weak lower semi-continuity of the norm and \eqref{eq4.2}, we derive that
	\[
	\int_{\mathbb{R}^3} (|\nabla \psi_f|^2 +|\Delta\psi_f|^2) dx \leq  \lim_{n \to +\infty} \int_{\mathbb{R}^3} (|\nabla \psi_n|^2 +|\Delta\psi_n|^2) dx = 4\pi\lim_{n \to +\infty} \int_{\R^3}\psi_n  f_n dx\le  4\pi\int_{\mathbb{R}^3} \psi_f f dx, \]
	which together with \eqref{end1} completes the proof.
\end{proof}
\section{Properties of the Bopp-Podolsky energy}\label{Se3}
In this section we focus on 
some properties of the Bopp-Podolsky energy $V$ defined in \eqref{eq:functional}. 
We notice the following fact.
    \begin{lem}
	For all $x\in\RT$, it holds
		\begin{equation*}
			1-e^{-|x|}= |x|\int_0^1 e^{-t|x|} \,dt,
		\end{equation*}
        and so, for any $f,g$
        such that $V(|f|,|g|)<+\infty$,
        it results
		\begin{equation*}
		V(f,g) =\int_0^1\int_{\R^3} \int_{\R^3} e^{-t|x-y|}f(x)g(y) \,dx\,dy\,dt.
	\end{equation*}
	\end{lem}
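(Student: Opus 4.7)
The first identity is a one-line calculus computation. For $x\neq 0$, set $r=|x|>0$ and compute directly
\[
\int_0^1 e^{-tr}\,dt = \left[-\frac{e^{-tr}}{r}\right]_{t=0}^{t=1}=\frac{1-e^{-r}}{r},
\]
so multiplying both sides by $r=|x|$ gives $|x|\int_0^1 e^{-t|x|}dt = 1-e^{-|x|}$. For $x=0$ both sides equal $0$, so the identity extends trivially.

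For the second identity, the plan is to substitute the first identity into the definition of $V$ and then swap the order of integration. Namely, dividing the first identity by $|x-y|$ (for $x\neq y$) yields
\[
\K(x-y) = \frac{1-e^{-|x-y|}}{|x-y|}= \int_0^1 e^{-t|x-y|}\,dt,
\]
and substituting into \eqref{eq:functional} gives
\[
V(f,g) = \int_{\R^3}\int_{\R^3}\left(\int_0^1 e^{-t|x-y|}dt\right)f(x)g(y)\,dx\,dy.
\]
It then remains to interchange the $t$-integral with the spatial integrals.

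The justification of the interchange is where the hypothesis $V(|f|,|g|)<+\infty$ enters. Since the integrand $e^{-t|x-y|}|f(x)||g(y)|$ is nonnegative on $[0,1]\times\R^3\times\R^3$, Tonelli's theorem allows unrestricted permutation of the three integrals; performing first the $t$-integration recovers $\K(x-y)|f(x)||g(y)|$, whose triple integral equals $V(|f|,|g|)<+\infty$ by assumption. Thus $e^{-t|x-y|}f(x)g(y)$ is absolutely integrable on the product space, and the genuine Fubini theorem applies to yield the asserted identity. I do not foresee any real obstacle: the argument is just a parametric representation of $\K$ combined with Fubini, and the hypothesis is designed precisely to make the swap legitimate.
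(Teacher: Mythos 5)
Your proof is correct and follows exactly the argument the paper intends: the paper states this lemma without proof as an elementary fact, and the only reasonable justification is the one you give --- compute the elementary $t$-integral, substitute the resulting representation $\K(x-y)=\int_0^1 e^{-t|x-y|}\,dt$ into \eqref{eq:functional}, and use Tonelli on the nonnegative integrand (where the hypothesis $V(|f|,|g|)<+\infty$ gives absolute integrability) followed by Fubini. Your handling of the edge cases ($x=0$ in the first identity, the measure-zero diagonal $x=y$) is careful and complete.
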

	Next, we are going to show the following	\begin{lem}\label{lem:main}
 		 For any non-zero function $f \in L^2(\R^3)$ such that $V(|f|,|f|)<+\infty$, we have that $V(f,f)$ is strictly positive.
	\end{lem}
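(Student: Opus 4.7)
The plan is to combine the integral representation from Lemma \ref{lem:kernel} with the fact that each Yukawa-type kernel $e^{-t|x-y|}$, $t>0$, is strictly positive definite on $\R^3$. First I would use the hypothesis $V(|f|,|f|)<+\infty$ together with Fubini-Tonelli and Lemma \ref{lem:kernel} to write
\begin{equation*}
V(f,f)=\int_0^1 I_t(f)\,dt,\qquad I_t(f):=\int_{\R^3}\int_{\R^3} e^{-t|x-y|}f(x)f(y)\,dx\,dy,
\end{equation*}
so that the statement reduces to showing that $I_t(f)>0$ for every $t\in(0,1)$.

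For each fixed $t>0$ I would then observe that $e^{-t|\cdot|}\in L^1(\R^3)\cap L^\infty(\R^3)$, so that $e^{-t|\cdot|}*f\in L^2(\R^3)$ for $f\in L^2(\R^3)$ and $I_t(f)=(e^{-t|\cdot|}*f,f)_{L^2}$ is finite. A standard spherical-coordinates computation gives
\begin{equation*}
\widehat{e^{-t|\cdot|}}(\xi)=\frac{8\pi t}{(t^2+|\xi|^2)^2},
\end{equation*}
which is \emph{strictly} positive for every $\xi\in\R^3$, and Plancherel's identity yields
\begin{equation*}
I_t(f)=\frac{1}{(2\pi)^3}\int_{\R^3}\frac{8\pi t}{(t^2+|\xi|^2)^2}\,|\hat f(\xi)|^2\,d\xi.
\end{equation*}
Since $f\not\equiv 0$ in $L^2(\R^3)$ forces $\hat f\not\equiv 0$, the integrand is positive on a set of positive measure and hence $I_t(f)>0$. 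Integrating on $(0,1)$ then yields $V(f,f)>0$.

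The only delicate point I anticipate is justifying the exchange of integrals in the very first display: one needs absolute integrability of $e^{-t|x-y|}|f(x)f(y)|$ in the variables $(t,x,y)$, which follows by inserting the representation in Lemma \ref{lem:kernel} into the integral defining $V(|f|,|f|)$, finite by assumption. An essentially equivalent but less self-contained route would be to read off $\widehat{\mathcal{K}}(\xi)=4\pi/(|\xi|^2(1+|\xi|^2))$ from $\mathcal{K}=\mathcal{C}-\mathcal{Y}$ and write $V(f,f)$ directly as a single positive Fourier integral; passing through Lemma \ref{lem:kernel} is in the spirit of the paper and sidesteps working with the distributional Fourier transform of the singular kernel $\mathcal{K}$.
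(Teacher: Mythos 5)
Your proposal is correct and follows essentially the same route as the paper's proof: both reduce $V(f,f)$ via Lemma \ref{lem:kernel} and Fubini (justified by $V(|f|,|f|)<+\infty$) to the positivity of $\int_{\R^3}\int_{\R^3} e^{-t|x-y|}f(x)f(y)\,dx\,dy$ for each $t>0$, which is then obtained from the strict positivity of the Fourier transform of $e^{-t|\cdot|}$ together with Parseval/Plancherel. The only difference is cosmetic: you compute $\widehat{e^{-t|\cdot|}}(\xi)=8\pi t/(t^2+|\xi|^2)^2$ explicitly, while the paper merely invokes the known positivity of this transform.
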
		
	\begin{proof} 
For  $t>0$, and $k\in \R^3$, let $g(t,k):=\mathcal{F}[e^{-t|\cdot|}](k)$, where $\mathcal{F}$ denotes the Fourier transform. We know that $g(t,k)>0$, for all $t>0$, and $k\in \R^3$.
For any $t>0$, by the convolution theorem and the 
Parseval's formula, we get
		\begin{equation*}
			\int_{\R^3} \int_{\R^3} e^{-t|x-y|}f(x)f(y) \,dx\,dy 
            =            \int_{\R^3} \abs{\mathcal{F}[f](k)}^2 g(t,k)\,dk >0.
		\end{equation*}
By using Fubini's theorem, we have that
		\[V(f,f)=\int_0^1\int_{\R^3} \int_{\R^3} e^{-t|x-y|} f(x)f(y)\,dx\,dy\,dt>0\]
concluding the proof.
	\end{proof}
	Furthermore, we have
    
\begin{prop}\label{prop:nonneg}
		Let $f:\R^3\to \mathbb R$ satisfy $V(|f|, |f|)<+\infty$. Then
		$$V(f,f)\geq0.$$
	\end{prop}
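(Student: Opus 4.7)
The plan is to combine the representation of $V$ provided by Lemma~\ref{lem:kernel} with the positive-definiteness of each kernel $e^{-t|x|}$ (the key computation already appearing in the proof of Lemma~\ref{lem:main}), and to handle the fact that $f$ is not assumed to be in $L^2(\R^3)$ by a truncation/approximation argument.

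First I would introduce the truncated sequence
\[
f_n(x) := \chi_{B_n}(x)\,\operatorname{sgn}(f(x))\,\min\{|f(x)|,n\},
\]
so that each $f_n$ is bounded with compact support, in particular $f_n\in L^2(\R^3)$, and $|f_n|\le |f|$ pointwise. The latter bound gives immediately $V(|f_n|,|f_n|)\le V(|f|,|f|)<+\infty$, so the hypothesis of Lemma~\ref{lem:main} applies to each $f_n$. Moreover $f_n(x)\to f(x)$ for a.e. $x\in\R^3$.

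Next, for every fixed $n$ and every $t>0$, the computation in the proof of Lemma~\ref{lem:main} shows that
\[
\int_{\R^3}\int_{\R^3} e^{-t|x-y|}f_n(x)f_n(y)\,dx\,dy
= \int_{\R^3} |\mathcal F[f_n](k)|^2\,g(t,k)\,dk \;\ge\; 0,
\]
because $g(t,k)=\mathcal F[e^{-t|\cdot|}](k)>0$. Integrating over $t\in(0,1)$ and using Lemma~\ref{lem:kernel} (which is legitimate because $V(|f_n|,|f_n|)<+\infty$ so that Fubini applies), I obtain $V(f_n,f_n)\ge 0$ for every $n$.

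Finally I would pass to the limit. Since $f_n(x)f_n(y)\to f(x)f(y)$ a.e. on $\R^3\times\R^3$ and $|f_n(x)f_n(y)|\le |f(x)||f(y)|$ with
\[
\int_{\R^3}\int_{\R^3}\K(x-y)|f(x)||f(y)|\,dx\,dy = V(|f|,|f|)<+\infty,
\]
the dominated convergence theorem gives $V(f_n,f_n)\to V(f,f)$, and the desired inequality $V(f,f)\ge 0$ follows. The only nontrivial point is the truncation step, and it is entirely routine once one notices that the Fourier argument of Lemma~\ref{lem:main} yields non-negativity of each $t$-slice and not merely of the integral.
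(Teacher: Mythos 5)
Your proof is correct and follows essentially the same route as the paper: the identical truncation $f_n := \chi_n \min\{n,\max\{-n,f\}\}$, non-negativity of $V(f_n,f_n)$ via the Fourier positivity underlying Lemma~\ref{lem:main}, and passage to the limit by dominated convergence with dominating function $\K(x-y)|f(x)||f(y)|$. You merely spell out the hypotheses checks (that $f_n\in L^2(\R^3)$ with $V(|f_n|,|f_n|)\le V(|f|,|f|)<+\infty$, and the $t$-slice-wise non-negativity, which also handles the case $f_n\equiv 0$) that the paper leaves implicit.
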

	\begin{proof}
We can take a non-zero function  $f$ and consider  the sequence
        \begin{equation*}
			f_n := \chi_n \min\{n, \max\{-n, f\}\}, 
		\end{equation*}
		where 	$\chi_n$ is the characteristic function of $B_n$. By Lemma \ref{lem:main}, we obtain $V(f_n, f_n)>0$, for each $n\ge 1$. Moreover,
        since $f_n\to f$ a.e. in $\R^3$,
        by Lebesgue's dominated convergence theorem, $V(f_n, f_n)\to V(f,f)$,  as $n\to +\infty$, and we are done.	    
	\end{proof}
We have not yet proved that $V$ is a well defined bilinear form when we consider functions $f$ such that $V(|f|,|f|)<+\infty$. This is shown in the next proposition together with a 
Cauchy–Schwarz type inequality.
	\begin{prop}\label{prop:main}
		For all  functions $f,g:\RT\to\R$, with $V(|f|, |f|), V(|g|, |g|)<+\infty$, we have that
		\beq\label{CS}|V(f,g)|^2\leq V(f,f)V(g,g).\eeq
	\end{prop}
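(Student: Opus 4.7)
The plan is to exploit the decomposition $V(f,g) = \int_0^1 B_t(f,g)\,dt$ provided by Lemma~\ref{lem:kernel}, where $B_t(f,g):=\int_{\R^3}\int_{\R^3} e^{-t|x-y|} f(x) g(y)\,dx\,dy$. The key observation, already used in the proof of Lemma~\ref{lem:main}, is that for each $t>0$ the function $\mathcal{F}[e^{-t|\cdot|}](k)$ is strictly positive on $\R^3$, so Parseval's identity gives
\[
B_t(f,g) = \int_{\R^3} \mathcal{F}[f](k)\,\overline{\mathcal{F}[g](k)}\,\mathcal{F}[e^{-t|\cdot|}](k)\,dk
\]
whenever the Fourier transforms are well defined (e.g.\ for $f,g\in L^1\cap L^2$), and Cauchy-Schwarz in the weighted space $L^2(\mathcal{F}[e^{-t|\cdot|}](k)\,dk)$ yields the pointwise-in-$t$ bound $|B_t(f,g)|^2 \le B_t(f,f)\,B_t(g,g)$.

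First I would prove \eqref{CS} in the easy case where $f,g$ are bounded with compact support, so that both belong to $L^1\cap L^2$. The Fourier representation above applies to $B_t$ for every $t>0$, giving $|B_t(f,g)|\le \sqrt{B_t(f,f)\,B_t(g,g)}$. Integrating over $t\in(0,1)$ and applying Cauchy-Schwarz in $L^2((0,1),dt)$ then produces
\[
|V(f,g)| \le \int_0^1 |B_t(f,g)|\,dt \le \int_0^1 \sqrt{B_t(f,f)\,B_t(g,g)}\,dt \le \sqrt{V(f,f)\,V(g,g)},
\]
which is \eqref{CS}.

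To reach the general case, I would approximate $f$ and $g$ by the truncations $f_n=\chi_n\min\{n,\max\{-n,f\}\}$ and $g_n=\chi_n\min\{n,\max\{-n,g\}\}$, analogous to those used in Proposition~\ref{prop:nonneg}. These satisfy $|f_n|\le|f|$, $|g_n|\le|g|$, $f_n\to f$ and $g_n\to g$ almost everywhere, and fall into the easy case above, so $|V(f_n,g_n)|^2 \le V(f_n,f_n)\,V(g_n,g_n)$. Passing to the limit via dominated convergence requires the dominating integrands $|f(x)f(y)|\K(x-y)$, $|g(x)g(y)|\K(x-y)$ and $|f(x)g(y)|\K(x-y)$ to be integrable; the first two are so by hypothesis. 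The finiteness of the third, i.e.\ $V(|f|,|g|)<+\infty$, is not immediate and I would derive it beforehand: applying the easy case to the non-negative truncations $|f|_n$, $|g|_n$ gives $V(|f|_n,|g|_n)\le \sqrt{V(|f|_n,|f|_n)\,V(|g|_n,|g|_n)}\le \sqrt{V(|f|,|f|)\,V(|g|,|g|)}$, and the monotone convergence theorem then allows one to pass $n\to+\infty$ on the left.

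I expect the main obstacle to be the pointwise-in-$t$ Cauchy-Schwarz for $B_t$ beyond the $L^2$ setting. The Fourier argument applies cleanly only to nice functions, so transferring the inequality to the full hypothesis requires a careful truncation together with a monotone-convergence step, and the subsequent limit passage for $V$ itself must handle $V(f,f)$, $V(g,g)$ and $V(f,g)$ simultaneously---this is precisely why the preliminary finiteness of $V(|f|,|g|)$ plays a pivotal role.
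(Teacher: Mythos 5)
Your proof is correct, but it follows a genuinely different route from the paper's. The paper does not prove any pointwise-in-$t$ Cauchy--Schwarz inequality: it first secures finiteness of $V(|f|,|g|)$ by truncating only in space, $f_n=\chi_n f$, $g_n=\chi_n g$ (no height cut is needed there, since Lemma~\ref{l1loc} together with $\|\K\|_\infty\le 1$ already gives $V(|f_n|,|g_n|)<+\infty$), then applies the nonnegativity of the quadratic form (Proposition~\ref{prop:nonneg}) to the combinations $\alpha|f_n|+\beta|g_n|$ and extracts the Cauchy--Schwarz inequality from the discriminant of the resulting quadratic in $(\alpha,\beta)$; after a Fatou/dominated-convergence passage yielding $V(|f|,|g|)^2\le V(|f|,|f|)V(|g|,|g|)<+\infty$, it repeats the same discriminant argument on $\alpha f+\beta g$ to obtain \eqref{CS} directly, with no further limit process. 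You instead push the Fourier computation of Lemma~\ref{lem:main} one step further, proving the layerwise bound $|B_t(f,g)|^2\le B_t(f,f)B_t(g,g)$ via Plancherel and the positivity of $\mathcal{F}[e^{-t|\cdot|}]$, and then integrate using Cauchy--Schwarz in $L^2((0,1),dt)$ through the decomposition of Lemma~\ref{lem:kernel}. Both arguments ultimately rest on the same kernel positivity (in the paper it enters only through Lemma~\ref{lem:main}, feeding Proposition~\ref{prop:nonneg}). What each buys: your route yields a stronger, $t$-pointwise statement and entirely avoids the slightly delicate step of verifying $V(|\alpha f+\beta g|,|\alpha f+\beta g|)<+\infty$ before expanding the square bilinearly, at the cost of requiring $f_n,g_n\in L^1\cap L^2$ (hence your double truncation in height and support) plus a separate monotone-convergence step for $V(|f|,|g|)$ and a dominated-convergence passage for the three quantities at once; the paper's discriminant route is purely algebraic once nonnegativity is in hand, works with the simpler truncation $\chi_n f$, and needs no final limit in the inequality itself. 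Your identification of $V(|f|,|g|)<+\infty$ as the pivot for the dominating function, and the order MCT-then-DCT, are exactly right, so the proposal stands as a complete alternative proof.
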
	
	\begin{proof}
Let us first show  that $V(f,g)\in\R$. 
Consider $V(|f_n|, |g_n|)$,  where $$f_n := \chi_n f,\qquad g_n := \chi_n g$$
and $\chi_n$ is the characteristic function of $B_n$.
Observe that, Lemma \ref{l1loc} implies that $f,g\in L_{\rm loc}^1(\R^3)$, then
$$V(|f_n|, |g_n|)
\leq
\left(\int_{B_n} |f|dx\right)\left(\int_{B_n} |g|dx\right)<+\infty$$
and, moreover, $V\left(\big|\alpha|f_n|+\beta|g_n|\big|,\big|\alpha|f_n|+\beta|g_n|\big|\right)<+\infty$, for any $\alpha, \beta\in\R$. So,
being $\K$ an even positive function, for any $\alpha, \beta\in\R$, 
$$0\leq V(\alpha |f_n|+\beta |g_n|, \alpha |f_n|+\beta |g_n|)=\alpha^2V(|f_n|,|f_n|)+2\alpha\beta V(|f_n|,|g_n|)+\beta^2 V(|g_n|,|g_n|).$$ 
Then 
\[V(|f_n|, |g_n|)^2\leq V(|f_n|, |f_n|)V(|g_n|, |g_n|)\]
and so, by Fatou's lemma and Lebesgue's dominated convergence theorem, we get 
\[V(|f|, |g|)^2\leq V(|f|, |f|)V(|g|, |g|)<+\infty.\]
As $|V(f,g)|\leq V(|f|,|g|)$, we obtain $V(f,g)\in\R$.
\\
Thus, for any $\alpha, \beta\in\R$, 
being 
\begin{equation}\label{vecspace}
V(|\alpha f+\beta g|, |\alpha f+\beta g|)\leq \alpha^2V(|f|,|f|)+2|\alpha\beta| V(|f|,|g|)+\beta^2 V(|g|,|g|)<+\infty,
\end{equation}
by Proposition \ref{prop:nonneg} 
$$0\leq V(\alpha f+\beta g, \alpha f+\beta g)=\alpha^2V(f,f)+2\alpha\beta V(f,g)+\beta^2 V(g,g),$$
and then we get \eqref{CS}.
\end{proof}
The previous results allow us to extend Theorem \ref{mainplus} removing the non-negativity assumptions. Indeed, let $f^+:=\max\{f,0\}$, $f^-:=\max\{-f,0\}$, and
\begin{align*}
	\mathcal E_1&:= \left\{f:\RT\to\R:\  V(|f|,|f|) < +\infty\right\},\\
	\mathcal E_2 &:= \left\{f:\RT\to\R:\  \psi_{f^+}, \psi_{f^-} \in \mathcal A\right\}.\nonumber
\end{align*}
We have
\bt\label{main}
The spaces $\mathcal{E}_1$ and $\mathcal{E}_2$ coincide. Moreover, if $f\in \E_2$, then $\psi_f\in \A$,  it satisfies \eqref{BPeq} in weak sense, and \eqref{energyidentity} holds.
\et	
\begin{proof}
First let us prove that $\mathcal E_1= \E_2$. Due to the symmetry of the kernel, we have, formally,
\[
	V(|f|,|f|)
    =V(f^+ +f^-, f^+ +f^-)
    =V(f^+, f^+)+V(f^-, f^-)+2V(f^+,f^-).
\]
Hence, using Proposition \ref{prop:main},  $V(|f|,|f|)$ is finite if and only if both $V(f^+, f^+)$ and $V(f^-, f^-)$ are finite.\\
Thus, if $f\in\E_1$ then $f^+,f^-\in \E_1^+$ and, from Theorem~\ref{mainplus}, $\psi_{f^+},   \psi_{f^-}\in \A$.  Vice versa, if $f\in \E_2$ then, 
again by Theorem~\ref{mainplus}, $f^+, f^-\in \E_1^+$ and then $f\in \E_1$.

Now, let $f\in \E_2$. Observe that, by the linearity of the convolution, we have $\psi_f = \psi_{f^+} -  \psi_{f^-}$. Hence, by Theorem \ref{mainplus}, 
we get that $\psi_f\in\A$ and  $\psi_f$ solves \eqref{BPeq} in 
weak sense.

Finally, let us show that the energy identity  \eqref{energyidentity} is satisfied as well. By Theorem~\ref{mainplus} we have
\[	\begin{aligned}
		&\int_{\R^3} |\nabla \psi_{f^+}|^2dx + \int_{\R^3} |\Delta \psi_{f^+}|^2dx = 4\pi \int_{\R^3}\psi_{f^+}f^+dx, \\
		&\int_{\R^3} |\nabla \psi_{f^-}|^2dx + \int_{\R^3} |\Delta \psi_{f^-}|^2 dx= 4\pi\int_{\R^3} \psi_{f^-}f^-dx.
	\end{aligned}\]
    Then
    \begin{align*}
		\int_{\R^3} |\nabla \psi_f|^2dx + \int_{\R^3} |\Delta \psi_f|^2dx
        &= \int_{\R^3} |\nabla(\psi_{f^+} - \psi_{f^-})|^2dx + \int_{\R^3} |\Delta(\psi_{f^+} - \psi_{f^-})|^2 dx
        \\
	&	= \int_{\R^3} |\nabla \psi_{f^+}|^2dx - 2\int_{\R^3} \nabla \psi_{f^+} \cdot \nabla \psi_{f^-} dx+ \int_{\R^3} |\nabla \psi_{f^-}|^2 dx
        \\ 
        &\qquad+ \int_{\R^3} |\Delta \psi_{f^+}|^2dx- 2\int_{\R^3} \Delta \psi_{f^+}  \Delta \psi_{f^-} dx+ \int_{\R^3} |\Delta \psi_{f^-}|^2dx\\
        &=4\pi V(f^+,f^+) + 4\pi V(f^-,f^-)\\
		&\quad- 2\int_{\R^3} \nabla \psi_{f^+} \cdot \nabla \psi_{f^-}dx - 2\int_{\R^3} \Delta \psi_{f^+}  \Delta \psi_{f^-}dx.
	\end{align*}
Taking into account that
		\[V(f,f)=V(f^+-f^-, f^+-f^-)=V(f^+, f^+)+V(f^-, f^-)-2V(f^+,f^-),\]
	for the energy identity to hold, we need to observe that
	\begin{equation*}
		\int_{\R^3} \nabla \psi_{f^+} \cdot \nabla \psi_{f^-} dx+ \int_{\R^3} \Delta \psi_{f^+}  \Delta \psi_{f^-} dx= 4\pi V(f^+,f^-),
	\end{equation*}
    which can be obtained by Theorem \ref{mainplus} taking, for instance, $\psi_{f^+}$ as weak solution and $\psi_{f^-}$ as test function.
\end{proof}
As a consequence of the previous result we obtain that $V$ is a positive definite bilinear form on $\mathcal{E}_1$.
\begin{prop}\label{Vpositive}
If $f\in \E_1\setminus\{0\}$,  then  $V(f,f)>0$. 
\end{prop}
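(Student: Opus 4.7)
The plan is to argue by contradiction, combining the energy identity from Theorem~\ref{main} with the defining equation of $\psi_f$. Assume that $f \in \E_1 \setminus \{0\}$ satisfies $V(f,f) = 0$. Since $\E_1 = \E_2$, Theorem~\ref{main} provides $\psi_f \in \A$ together with the energy identity
\[
\int_{\R^3}\bigl(|\nabla \psi_f|^2 + |\Delta \psi_f|^2\bigr)\,dx = 4\pi\, V(f,f) = 0.
\]
In particular $\nabla \psi_f = 0$ almost everywhere in $\R^3$.

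Now I would use the defining inclusion $\A \subset D^{1,2}(\R^3) \subset L^6(\R^3)$. A function in $D^{1,2}(\R^3)$ with vanishing gradient is constant; since it must also lie in $L^6(\R^3)$, this constant has to be zero. Hence $\psi_f \equiv 0$ almost everywhere.

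On the other hand, Theorem~\ref{main} also asserts that $\psi_f$ solves $-\Delta \psi_f + \Delta^2 \psi_f = 4\pi f$ in weak sense, so for every $\varphi \in C_c^\infty(\R^3)$
\[
4\pi \int_{\R^3} f\,\varphi\,dx = \int_{\R^3} \bigl(\nabla \psi_f \cdot \nabla \varphi + \Delta \psi_f\,\Delta \varphi\bigr)\,dx = 0.
\]
By the fundamental lemma of the calculus of variations, $f = 0$ almost everywhere in $\R^3$, contradicting the assumption $f \neq 0$. Combined with $V(f,f) \geq 0$ from Proposition~\ref{prop:nonneg}, this yields $V(f,f) > 0$.

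I do not anticipate a serious obstacle: the energy identity together with the weak equation reduces the claim to elementary properties of $D^{1,2}(\R^3)$. The only subtlety worth spelling out is the step in which $\psi_f = 0$ is deduced from $\nabla\psi_f = 0$, which relies on the $L^6$-integrability built into the definition of $D^{1,2}(\R^3)$ (equivalently, on the Sobolev embedding), rather than on the weaker information $\psi_f \in L^1_{\mathrm{loc}}(\R^3)$.
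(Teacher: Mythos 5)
Your proof is correct and follows essentially the same route as the paper: both argue by contradiction, using the energy identity \eqref{energyidentity} to conclude $\psi_f=0$ and then the weak equation \eqref{BPeq} to force $f=0$. The only difference is that you make explicit the step the paper leaves implicit, namely that $\|\psi_f\|_\A=0$ implies $\psi_f=0$ via $\A\subset D^{1,2}(\RT)\subset L^6(\RT)$, which is a worthwhile clarification but not a different argument.
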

\begin{proof}
By Proposition~\ref{prop:nonneg}, we know that $V(f,f)\geq 0$. By contradiction, let us assume that  $V(f,f)=0$. By \eqref{energyidentity}, we then get
$		0=4\pi V(f,f) 
=\|\psi_f\|_\A^2$, 
which implies that 
$\psi_f=0$. Since $\psi_f$ satisfies \eqref{BPeq} by Theorem~\ref{main}, we conclude that $f=0$.
\end{proof}
\begin{rem}\label{preHilbert}
		If $f,g\in \E_1$, \eqref{vecspace} shows that $\E_1$ is a vector space on $\R$. Moreover, from Propositions~\ref{prop:nonneg} and \ref{Vpositive}, $V$ is a inner product on $\mathcal E_1$.
	\end{rem}
\begin{rem}\label{kappahat}
    Observe that, since $\mathcal{F}(\K)(\xi)=(|\xi|^2+|\xi|^4)^{-1}>0$, for any $\xi\neq 0$, if $f\in L^2(\R^3)\setminus\{0\}$ and we know that $\K\ast f\in L^2(\R^3)$, we have 
    \[
    V(f,f)=\irt \mathcal{F}(\K)(\xi)|\hat{f}(\xi)|^2d \xi>0.
    \]
    However, in general, we cannot deduce that $\K\ast f\in L^2(\RT)$, even if $V(|f|,|f|)<+\infty$. For example, it's easy to check that if $f$ is the characteristic function of the ball $B_1$, we have that $V(|f|,|f|)<+\infty$ but $\K\ast f\notin L^2(\RT)$.
\end{rem}
 Finally, inspired by \cite[Theorem 3.1]{R10}, we establish the following interesting inequality, that implies Theorem~\ref{Thm1}.
\begin{thm}\label{thm3.1}
		Given $\alpha>1/2$, there exists some constant $C=C(\alpha)>0$ such that, for any  function $f:\R^3\to[0,+\infty)$, there holds that
		\begin{equation*}
			    V(f,f)\ge
   C \left(\int_{\R^3} \frac{f(x)}{(1+|x|^2)^{1/4}(1+|\log |x||)^\alpha} dx\right)^2.
		\end{equation*}
	\end{thm}
	\begin{proof}
Observe that
    \begin{align*}
        V(f,f)
        &=
        \int_{\R^3}dy\int_{\R^3}\mathcal{K}(x-y) f(x) f(y) dx\\
        &\geq
        \int_{\R^3}dy\int_{|y|<2|x|<4|y|} \mathcal{K}(x-y) f(x) f(y) dx\\
        &=
        \int_{B_1}dy\int_{|y|<2|x|<4|y|} \mathcal{K}(x-y) f(x) f(y) dx
        +
        \int_{B_1^c}dy\int_{|y|<2|x|<4|y|}\mathcal{K}(x-y) f(x) f(y) dx.
\end{align*}

If $|y|<1$ and $|y|<2|x|<4|y|$, then $|x-y|\leq |x|+|y|<2|y|+1<3$ and so $\mathcal{K}(x-y)\geq C>0$. Thus
        \[
        \int_{B_1}dy\int_{|y|<2|x|<4|y|} \mathcal{K}(x-y) f(x) f(y) dx
        \geq
        C \int_{B_1}dy\int_{|y|<2|x|<4|y|} f(x) f(y) dx.
        \]

If $|y|\geq 1$ and $|y|<2|x|<4|y|$, so that $|x|>1/2$, we distinguish two cases.\\
If, in addition, $|x-y|\leq 1$, then
\[
\mathcal{K}(x-y) |x|^\frac{1}{2} |y|^\frac{1}{2}
\geq C.
\]
If, in addition, $|x-y|\geq 1$, then
\[
\mathcal{K}(x-y) |x|^\frac{1}{2} |y|^\frac{1}{2}
=
(1-e^{-|x-y|})\frac{|x|^\frac{1}{2} |y|^\frac{1}{2}}{|x-y|}
\geq C.
\]
Thus
        \[
        \int_{B_1^c}dy\int_{|y|<2|x|<4|y|}\mathcal{K}(x-y) f(x) f(y) dx
        \geq
        C\int_{B_1^c}dy\int_{|y|<2|x|<4|y|}\frac{f(x)}{|x|^\frac{1}{2}} \frac{f(y)}{|y|^\frac{1}{2}} dx.
        \]
        Hence, if
        \[
        \mathcal{J}(r)=
        \begin{cases}
            1 & \text{ for }0\leq r \leq 1/2 \\
            1/\sqrt{2r} & \text{ for } r > 1/2,
        \end{cases}
        \]
        we have that
        \begin{align*}
            V(f,f)
        &\geq
        C \left(\int_{B_1}dy\int_{|y|<2|x|<4|y|} f(x) f(y) dx
        +
        \int_{B_1^c}dy\int_{|y|<2|x|<4|y|}\frac{f(x)}{|x|^\frac{1}{2}} \frac{f(y)}{|y|^\frac{1}{2}} dx\right)\\
        &\ge 
        C \int_{\R^3}dy\int_{|y|<2|x|<4|y|} \mathcal{J}(|x|)f(x) \mathcal{J}(|y|)f(y) dx\\
        &= C
        \int_0^{+\infty}dr\int_{r/2}^{2r} \left(\mathcal{J}(r)\int_{|y|=r} f(y) d\sigma_y\right) \left(\mathcal{J}(s)\int_{| x|=s} f(x) d\sigma_x\right) ds.
        \end{align*}
Now, taking
        \[
        h(r)=\frac{\mathcal{J}(r)}{(1+|\log r|)^\alpha}\int_{|y|=r} f(y) d\sigma_y,
        \]
        applying \cite[Lemma 3.2]{R10}, and since there exists $C>0$ such that for every $r\geq 0$
        \[
        \mathcal{J}(r)\geq \frac{C}{(1+r^2)^{1/4}},
        \]
        we get
        \[
        V(f,f)
        \geq C \left(\int_0^{+\infty} h(r) dr\right)^2
        \geq C \left(\int_0^{+\infty} \frac{1}{(1+r^2)^{1/4}(1+|\log r|)^\alpha}\int_{|y|=r} f(y) d\sigma_y dr\right)^2
        \]
        concluding the proof.
	\end{proof}
\section{The functional setting}\label{Se4}
In this section we study some properties of the space $\mathcal E$ introduced in \eqref{E}. 
Analogously to \cite[Prop. 2.2]{R10}, we have the following property.
\begin{prop}\label{PropA.2}
The space $(\E,\|\cdot\|_{\E})$ is a uniformly convex Banach space. 
	\end{prop}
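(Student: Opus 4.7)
The plan is to verify, in order, that $\|\cdot\|_{\mathcal E}$ is a norm, that $(\mathcal E,\|\cdot\|_{\mathcal E})$ is complete, and that it is uniformly convex. Homogeneity and positivity of $\|\cdot\|_{\mathcal E}$ are straightforward: $\|\lambda u\|_{\mathcal E}=|\lambda|\,\|u\|_{\mathcal E}$ by the bilinearity of $V$, and $\|u\|_{\mathcal E}=0$ forces $\nabla u=0$, which in $D^{1,2}(\R^3)$ gives $u=0$. The first nontrivial point is the triangle inequality.

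The key observation is that $N(u):=V(u^2,u^2)^{1/4}$ is itself a seminorm. Expanding by the bilinearity of $V$,
\[
V((u+v)^2,(u+v)^2)=V(u^2,u^2)+V(v^2,v^2)+2V(u^2,v^2)+4V(u^2,uv)+4V(v^2,uv)+4V(uv,uv),
\]
I would bound each mixed term by the Cauchy-Schwarz inequality of Proposition~\ref{prop:main}, and use the pointwise inequality $|u(x)v(x)u(y)v(y)|\leq \tfrac12(u^2(x)v^2(y)+u^2(y)v^2(x))$, together with $\mathcal K\geq 0$, to obtain $V(uv,uv)\leq V(u^2,v^2)\leq V(u^2,u^2)^{1/2}V(v^2,v^2)^{1/2}$. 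Setting $a=V(u^2,u^2)^{1/2}$, $b=V(v^2,v^2)^{1/2}$, $c=V(uv,uv)\leq ab$, these bounds collapse the expansion into
\[
V((u+v)^2,(u+v)^2)\leq (a+b)^2+4\sqrt{c}\,(a+b)+4c=\bigl((a+b)+2\sqrt{c}\bigr)^2,
\]
so, taking square roots and using $\sqrt c\leq\sqrt{ab}$, $N(u+v)^2\leq a+b+2\sqrt{ab}=(\sqrt a+\sqrt b)^2=(N(u)+N(v))^2$. The triangle inequality for $\|\cdot\|_{\mathcal E}$ then follows by applying Minkowski's inequality in $\R^2$ to the pair $(\|\nabla\cdot\|_2,N(\cdot))$.

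For completeness, take $\{u_n\}$ Cauchy in $\mathcal E$. Then $\{\nabla u_n\}$ is Cauchy in $L^2(\R^3)$, so $u_n\to u$ in $D^{1,2}(\R^3)$ and, along a subsequence, a.e.\ in $\R^3$. Boundedness of $V(u_n^2,u_n^2)$ and Fatou's lemma yield $V(u^2,u^2)<+\infty$, hence $u\in\mathcal E$. Applying Fatou in the index $m$ to $V((u_n-u_m)^2,(u_n-u_m)^2)$, using $u_m\to u$ a.e., gives $V((u_n-u)^2,(u_n-u)^2)\to 0$, which together with convergence of $\nabla u_n$ proves $\|u_n-u\|_{\mathcal E}\to 0$.

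The uniform convexity is the main obstacle. My plan is to linearize: $f\mapsto\phi_f=\mathcal K*f$ is a linear operator into the Hilbert space $\mathcal A$, and the energy identity $\|\phi_f\|_{\mathcal A}^2=4\pi V(f,f)$ from Theorem~\ref{main} transfers $V$-computations into parallelogram identities in $\mathcal A$. Since $\phi_{(u+v)^2}+\phi_{(u-v)^2}=2\,\phi_{u^2+v^2}$, the $\mathcal A$-parallelogram law rewrites as
\[
V((u+v)^2,(u+v)^2)+V((u-v)^2,(u-v)^2)=2V(u^2+v^2,u^2+v^2)+8V(uv,uv),
\]
which, combined with the $L^2$-parallelogram identity for $\|\nabla\cdot\|_2$ and the Cauchy-Schwarz/AM-GM estimates used in the triangle inequality step, yields a Clarkson-type inequality giving a strictly positive modulus of convexity for $\|\cdot\|_{\mathcal E}$. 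The hardest point is that $u\mapsto u^2$ is nonlinear, so $V(u^2,u^2)^{1/2}$ does not obey a parallelogram identity and $\mathcal E$ is not Hilbertian; the cross terms $V(uv,uv)$ and $V(u^2-v^2,u^2-v^2)$ have to be absorbed by exploiting the extremal hypothesis $\|u_n+v_n\|_{\mathcal E}\to 2$, following the template of \cite[Prop.~2.2]{R10}.
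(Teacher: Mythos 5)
Your proposal is correct and takes essentially the same route as the paper: your expansion of $V((u+v)^2,(u+v)^2)$ with the Cauchy--Schwarz bounds collapsing to $\bigl((a+b)+2\sqrt{c}\bigr)^2$ is exactly the paper's computation $\|(u+v)^2\|_{\E_1}\leq\bigl(\|u^2\|_{\E_1}^{1/2}+\|v^2\|_{\E_1}^{1/2}\bigr)^2$ in the pre-Hilbert space $(\E_1,V)$, with your bound $V(uv,uv)\le V(u^2,u^2)^{1/2}V(v^2,v^2)^{1/2}$ being the paper's inequality \eqref{uvuv}. Your Fatou-based completeness argument and the parallelogram identity for the quartic part (which holds directly from bilinearity of $V$ on $\E_1$, without the detour through $\A$) are precisely the content of the step the paper delegates to ``the same lines as \cite[Proposition 2.2]{R10}'', and your sketched absorption of the cross terms along extremal sequences does close as planned.
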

    \begin{proof}
First we observe that, if $u,v\in\mathcal{E}$,
by H\"older inequality, 
		\begin{equation}\label{uvuv}
			|V(uv,uv)|
            \leq V(|uv|,|uv|)\leq V(u^2,u^2)^\frac{1}{2}V(v^2,v^2)^\frac{1}{2}
            <+\infty.
            \end{equation}
Then also $uv\in\mathcal{E}_1$, in addition to $u^2,v^2\in \mathcal{E}_1$, and so, by Remark~\ref{preHilbert},  we obtain that $\E$ is a vector space on $\R$.

Moreover, if we denote by $\|\cdot\|_{\E_1}$ the norm on $\E_1$ induced by $V$, using again \eqref{uvuv}  we get
\begin{align*}V\big((u+v)^2,(u+v)^2\big)=\|(u+v)^2\|_{\E_1}^2&\leq \big(\|u^2\|_{\E_1}+2\|uv\|_{\E_1}+\|v^2\|_{\E_1})^2\\
	&\leq \big(\|u^2\|_{\E_1}+2\|u^2\|^{1/2}_{\E_1}\|v^2\|^{1/2}_{\E_1}+\|v^2\|_{\E_1}\big)^2\\
	&=\big(\|u^2\|_{\E_1}^{1/2}+\|v^2\|^{1/2}_{\E_1})^4=
	\big(V(u^2,u^2)^{1/4}+V(v^2,v^2)^{1/4}\big)^4.
	\end{align*}
Hence  $u\in \E\mapsto V(u^2,u^2)^{1/4}$ satisfies the triangle inequality and so it is a norm on $\E$.
Then, proceeding along the same lines as \cite[Proposition 2.2]{R10},  we obtain that $(\E,\|\cdot\|_{\E})$ is a uniformly convex Banach space.
\end{proof}
\begin{prop}\label{density}
The space  $C_c^{\infty}(\R^3)$ is dense in $(\E,\|\cdot\|_\E)$.
	\end{prop}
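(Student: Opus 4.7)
The plan is a standard two-step approximation: first truncate a generic $u\in\E$ by a smooth cut-off to reduce to compactly supported $\E$-functions, and then convolve with a mollifier to obtain $C_c^\infty$ approximations. The gradient part of $\|\cdot\|_\E$ is controlled by standard manipulations; the real work is to show that the Bopp-Podolsky term $V((u-v)^2,(u-v)^2)$ vanishes along both approximations.

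For the cut-off step, fix $\eta\in C_c^\infty(\R^3)$ with $\eta\equiv 1$ on $B_1$, $\eta\equiv 0$ outside $B_2$, $0\le\eta\le 1$, and set $\eta_R(x):=\eta(x/R)$, $u_R:=\eta_R u$. Writing $\nabla(u-u_R)=(1-\eta_R)\nabla u - u\nabla\eta_R$, the first summand tends to zero in $L^2(\R^3)$ by dominated convergence, while H\"older combined with $|\nabla\eta_R|\le C/R$ and the volume bound $|B_{2R}\setminus B_R|^{2/3}\le C R^2$ yields
$$\|u\nabla\eta_R\|_2^2 \le C\,\|u\|_{L^6(B_{2R}\setminus B_R)}^2 \to 0$$
since $u\in L^6(\R^3)$. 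For the Bopp-Podolsky contribution, the pointwise bound $(u-u_R)^2\le u^2\chi_{B_R^c}$ combined with the positivity of $\K$ gives
$$V((u-u_R)^2,(u-u_R)^2) \le V(u^2\chi_{B_R^c},u^2\chi_{B_R^c}),$$
and the right-hand side vanishes as $R\to\infty$ by dominated convergence applied to the double integral: its integrand is dominated by the integrable function $\K(x-y)u^2(x)u^2(y)$ (recall $V(u^2,u^2)<+\infty$) and converges pointwise to zero.

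For the mollification step, given $u_R$ with support in $B_{2R}$, set $u_R^\eps:=u_R*\rho_\eps$; then $u_R^\eps\in C_c^\infty(\R^3)$ for $\eps$ small, and $\nabla u_R^\eps\to\nabla u_R$ in $L^2(\R^3)$. For the $V$-term I will use the elementary pointwise inequality $\K(x)\le 1/|x|$, which combined with the Hardy--Littlewood--Sobolev inequality for the Coulomb kernel produces
$$V(f,f)\le C\,\|f\|_{L^{6/5}(\R^3)}^2 \qquad\text{for every }f\in L^{6/5}(\R^3).$$
Applying this with $f=(u_R^\eps-u_R)^2$ bounds $V((u_R^\eps-u_R)^2,(u_R^\eps-u_R)^2)$ by $C\,\|u_R^\eps-u_R\|_{12/5}^4$. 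Since $u_R$ has compact support and lies in $L^6(\R^3)$, hence in $L^{12/5}(\R^3)$, mollification converges in $L^{12/5}(\R^3)$ and the right-hand side vanishes as $\eps\to 0$. A diagonal argument then produces the desired $C_c^\infty$ approximants. The only non-trivial ingredient is the HLS-type estimate on $V$ displayed above, which is immediate from $\K\le 1/|\cdot|$ and the classical Hardy--Littlewood--Sobolev inequality; the remainder is standard measure-theoretic bookkeeping.
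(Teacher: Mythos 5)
Your proof is correct, and its first step coincides with the paper's: the same cut-off decomposition $\nabla(u-u_R)=(1-\eta_R)\nabla u-u\nabla\eta_R$ with the $L^6$ estimate on the annulus, and the same domination
$V((u-u_R)^2,(u-u_R)^2)\le \int_{B_R^c}\int_{B_R^c}\K(x-y)u^2(x)u^2(y)\,dx\,dy\to 0$,
which is legitimate by dominated convergence precisely because $V(u^2,u^2)<+\infty$. Where you genuinely diverge is the smoothing step. You mollify directly and control the Bopp--Podolsky term via the pointwise bound $\K(x)\le 1/|x|$ together with Hardy--Littlewood--Sobolev, $V(f,f)\le C\|f\|_{6/5}^2$, applied to $f=(u_R^\eps-u_R)^2$, so that convergence of the mollifications in $L^{12/5}(\R^3)$ suffices; this is sound, since the truncation is compactly supported and in $L^6(\R^3)$, hence in $L^{12/5}(\R^3)$. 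The paper instead uses the cruder bound $\|\K\|_\infty\le 1$, which gives $V(v^2,v^2)\le\|v\|_2^4$ and hence the continuous embedding $H^1(\R^3)\hookrightarrow\E$; observing that each truncation (compactly supported, in $L^6$, with gradient in $L^2$) lies in $H^1(\R^3)$, it then simply quotes the density of $C_c^{\infty}(\R^3)$ in $H^1(\R^3)$, with no explicit mollification and no HLS. The trade-off: your route requires the HLS inequality but produces the sharper estimate $V(f,f)\le C\|f\|_{6/5}^2$, valid without any support restriction, whereas the paper's route is more elementary and yields $H^1(\R^3)\hookrightarrow\E$ as a by-product, a fact it reuses later (the proof of Proposition~\ref{pr3} invokes the same $H^1$ approximating sequence). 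On compactly supported truncations either integrability ($L^2$ or $L^{12/5}$) is available, so your extra refinement is unnecessary here, but nothing in your argument fails.
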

\begin{proof}
Define
    $$\theta_n(x) := \theta(x/n),~~n\ge 1,$$
    where $\theta \in C_c^\infty(\mathbb{R}^3)$ is a smooth cut-off function with
	$\theta(x) = 1$ for $|x| \leq 1$, 
	$\theta(x) = 0$ for $|x| \geq 2$, 
	$0 \leq \theta \leq 1$ everywhere.
    Clearly, $|\nabla \theta_n| \leq C/n$, for some $C>0$. Thus, taking $u \in \mathcal{E}$ and setting $u_n:=\theta_n u$, we 
have that $\{u_n\}\subset H^{1}(\R^3)$. Moreover, let $A_n:=B_{2n}\setminus B_n$.
Since
\[
\|\n u -\n u_n\|_2
   \le C\left(\|u\n \theta_n\|_2 +\|(1-\theta_n)\n u\|_2\right)
   \le \frac{C}{n}\|u\|_{L^6(A_n)} + C\|\n  u\|_{L^2(B_n^c)}=o_n(1),
\]
we deduce that $\nabla u_n\to \nabla u$ in $L^2(\R^3)$. On the other hand,
\begin{align*}
   V((u_n-u)^2,(u_n-u)^2)
   %&=\int_{\R^3}\int_{\R^3}\K(x-y)(u_n-u)^2(x)(u_n-u)^2(y)dxdy\\
   %&=\int_{\R^3}\int_{\R^3}\K(x-y)(\theta_nu-u)^2(x)(\theta_nu-u)^2(y)dxdy\\
     %&=\int_{\R^3}\int_{\R^3}\K(x-y)(\theta_n(x)-1)^2u^2(x)(\theta_n(y)-1)^2u^2(y)dxdy\\
      &=\int_{B_n^c}\int_{B_n^c}\K(x-y)(\theta_n(x)-1)^2u^2(x)(\theta_n(y)-1)^2u^2(y)dxdy\\
       &\le \int_{B_n^c}\int_{B_n^c}\K(x-y)u^2(x)u^2(y)dxdy\to 0.
\end{align*}
This implies that $u_n\to u$ in $\E$.

Therefore, since $ V(u^2,u^2)\leq \|u\|_2^4$ for any $u\in H^{1}(\R^3)$, we have that  $ H^{1}(\R^3)\hookrightarrow \E$ and we conclude by the density  of
		$C_c^{\infty}(\R^3)$ in $H^{1}(\R^3)$. 
	\end{proof}
Moreover we have the following convergence properties.
\begin{lem}\label{lem3.4}
   Let $\{u_n\}$ be a sequence in $\E$. We have that
   \begin{enumerate}[label=(\roman*),ref=\roman*]
       \item \label{weakconv} if $u_n\rightharpoonup u \text{ weakly in } \E$, then
		$\phi_{u_n}\rightharpoonup \phi_{u}$  weakly in $\mathcal{A}$;
        \item \label{stronvconv}  $u_n\to u$ in $\E$ if and only if
    $u_n\to u$ in $D^{1,2}(\RT)$ 
    and $\phi_{u_n}\to \phi_{u}$ in $\A$.
   \end{enumerate}
\end{lem}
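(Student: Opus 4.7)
For part (i), I plan to use the energy identity from Theorem~\ref{main} to show that $\{\phi_{u_n}\}$ is bounded in the Hilbert space $\A$, then identify the weak limit by passing to the limit in the defining PDE. Precisely, weak convergence $u_n\rightharpoonup u$ in $\E$ yields a uniform bound on $\|u_n\|_\E$ and hence on $V(u_n^2,u_n^2)=\|\phi_{u_n}\|_\A^2/(4\pi)$. Extracting a subsequence with $\phi_{u_{n_k}}\rightharpoonup \zeta$ weakly in $\A$, I pass to the limit in the weak form $\int(\nabla \phi_{u_n}\cdot \nabla \varphi+\Delta \phi_{u_n}\Delta \varphi)\,dx = 4\pi \int u_n^2 \varphi\,dx$ for every $\varphi\in C_c^\infty(\RT)$: the left-hand side passes via weak convergence in $\A$, while on the right-hand side the continuous embedding $\E\hookrightarrow D^{1,2}(\RT)$ together with Rellich-Kondrachov gives $u_n\to u$ in $L^2_{\mathrm{loc}}(\RT)$, hence $\int u_n^2\varphi\,dx \to \int u^2\varphi\,dx$. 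Uniqueness in Theorem~\ref{thmE} forces $\zeta=\phi_u$, and the standard subsequence argument upgrades this to weak convergence of the full sequence.

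For the forward implication of (ii), $\|\nabla(u_n-u)\|_2\le \|u_n-u\|_\E\to 0$ is immediate from the definition of the $\E$-norm. For the $\A$-convergence of $\phi_{u_n}$, I use the energy identity $\|\phi_{u_n}-\phi_u\|_\A^2=4\pi V(u_n^2-u^2,u_n^2-u^2)$ from Theorem~\ref{main}, expand $u_n^2-u^2 = (u_n-u)^2 + 2u(u_n-u)$, and apply the triangle inequality for the inner product $V$ on $\E_1$ (which is positive definite by Proposition~\ref{Vpositive}). This reduces the problem to controlling the cross term $V(u(u_n-u),u(u_n-u))$. Here I exploit the pointwise bound $2|f(x)g(x)f(y)g(y)|\le f(x)^2 g(y)^2+f(y)^2 g(x)^2$, which together with the symmetry of $\K$ yields $V(fg,fg)\le V(f^2,g^2)$; combining with the Cauchy-Schwarz inequality of Proposition~\ref{prop:main} gives $V(u(u_n-u),u(u_n-u))\le V(u^2,u^2)^{1/2}V((u_n-u)^2,(u_n-u)^2)^{1/2}\to 0$.

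For the converse implication of (ii), I exploit the uniform convexity of $(\E,\|\cdot\|_\E)$ from Proposition~\ref{PropA.2}. First, the energy identity gives $V(u_n^2,u_n^2)=\|\phi_{u_n}\|_\A^2/(4\pi)\to V(u^2,u^2)$, and combined with $\|\nabla u_n\|_2\to \|\nabla u\|_2$ this yields $\|u_n\|_\E\to \|u\|_\E$. Next, boundedness of $\{u_n\}$ in the reflexive space $\E$ allows me to extract weakly convergent subsequences $u_{n_k}\rightharpoonup v$ in $\E$; the continuous embedding $\E\hookrightarrow D^{1,2}(\RT)$ and strong convergence $u_n\to u$ in $D^{1,2}(\RT)$ force $v=u$, so the full sequence satisfies $u_n\rightharpoonup u$ weakly in $\E$. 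Finally, in a uniformly convex Banach space, weak convergence together with convergence of norms implies strong convergence (Radon-Riesz property), giving $u_n\to u$ in $\E$.

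The main obstacle is the ``product'' Cauchy-Schwarz inequality $V(fg,fg)\le V(f^2,f^2)^{1/2}V(g^2,g^2)^{1/2}$ needed for the forward direction of (ii), since Proposition~\ref{prop:main} only provides Cauchy-Schwarz for the bilinear form itself on single arguments. Bridging this gap by passing through $V(f^2,g^2)$ via the pointwise AM-GM bound and the symmetry of $\K$ is the essential new observation; the remaining ingredients---Rellich-Kondrachov, uniqueness from Theorem~\ref{thmE}, and the Radon-Riesz property in uniformly convex spaces---are then classical.
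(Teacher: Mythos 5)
Your proof is correct. Parts (\ref{weakconv}) and the converse half of (\ref{stronvconv}) follow essentially the paper's route: the paper likewise passes to the limit in $\langle\phi_{u_n},\varphi\rangle_{\A}=4\pi\int_{\RT}u_n^2\varphi\,dx$ for $\varphi\in C_c^\infty(\RT)$ using strong $L^\tau_{\rm loc}$ convergence of $u_n$ and density, and for the converse it combines boundedness in $\E$, identification of the weak limit through the embedding into $D^{1,2}(\RT)$, convergence of the $\E$-norms, and uniform convexity from Proposition~\ref{PropA.2}; you merely spell out the Radon--Riesz step that the paper compresses into ``proceeding as before''. The genuine divergence is the forward half of (\ref{stronvconv}). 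The paper argues softly: from $u_n\to u$ in $\E$ and the triangle inequality for $u\mapsto V(u^2,u^2)^{1/4}$ it gets $\|\phi_{u_n}\|_{\A}\to\|\phi_u\|_{\A}$ via $4\pi V(u_n^2,u_n^2)=\|\phi_{u_n}\|_{\A}^2$, invokes part (\ref{weakconv}) for weak convergence, and concludes by the Hilbert structure of $\A$ (weak convergence plus norm convergence implies strong convergence). You instead prove the quantitative identity $\|\phi_{u_n}-\phi_u\|_{\A}^2=4\pi V(u_n^2-u^2,u_n^2-u^2)$ --- legitimate by Theorem~\ref{main}, since $u_n^2-u^2\in\mathcal{E}_1$ (a vector space by Remark~\ref{preHilbert}) and $\psi_{u_n^2-u^2}=\phi_{u_n}-\phi_u$ by linearity of the convolution --- then expand $u_n^2-u^2=(u_n-u)^2+2u(u_n-u)$ and control the cross term. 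Be aware that the ``product'' inequality $V(uv,uv)\le V(u^2,u^2)^{1/2}V(v^2,v^2)^{1/2}$, which you flag as the essential new observation, is exactly inequality \eqref{uvuv} in the proof of Proposition~\ref{PropA.2}, obtained there by Cauchy--Schwarz with respect to the positive measure $\K(x-y)\,dx\,dy$ (your AM--GM bound plus the symmetry of $\K$ is an equivalent derivation), so no new tool is actually required. What your route buys is independence from part (\ref{weakconv}) and an explicit modulus of continuity: it shows $\|\phi_{u_n}-\phi_u\|_{\A}\le C\left(V((u_n-u)^2,(u_n-u)^2)^{1/2}+V(u^2,u^2)^{1/4}V((u_n-u)^2,(u_n-u)^2)^{1/4}\right)^{1/2}$, i.e.\ local H\"older continuity of $u\in\E\mapsto\phi_u\in\A$ on bounded sets, which is strictly more than the sequential continuity the paper states.
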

 \begin{proof}
		Let us prove (\ref{weakconv}). Note that, if $u_n\rightharpoonup u \text{ weakly in } \E$, then
		$u_n\rightarrow u$ strongly in $L^\tau_{\rm loc}(\R^3)$, for any $\tau\in[1,6)$, Thus, for any $\varphi\in C_c^{\infty}(\R^3)$, it follows by Theorem~\ref{thmE} that
		$$\langle \phi_{u_n},\varphi\rangle_{\mathcal{A}}=4\pi\int_{\R^3}u_n^2\varphi dx \to 4\pi\int_{\R^3}u^2\varphi dx=\langle \phi_{u},\varphi\rangle_{\mathcal{A}},$$
		and, by density, we conclude.

        To prove (\ref{stronvconv}) first observe that, if $u_n\to u$ in $\mathcal{E}$, then, obviously, $u_n\to u$ in $D^{1,2}(\RT)$     and $V(u_n^2,u_n^2)\to V(u^2,u^2)$. Then, since by Theorem \ref{thmE} $4\pi V(u_n^2,u_n^2)= \|\phi_{u_n}\|_{\mathcal{A}}^2$, for any $n\in \N$, using (\ref{weakconv}) we have that, up to a subsequence $\{\phi_{u_n}\}$ converges weakly to  $\phi_u$ in $\mathcal{A}$. Then we get that $\phi_{u_n} \to \phi_u$ in $\mathcal{A}$.
        Vice versa, if
    $u_n\to u$ in $D^{1,2}(\RT)$ 
    and $\phi_{u_n}\to \phi_{u}$ in $\A$ (namely $V(u_n^2,u_n^2)\to V(u^2,u^2)$), then, $\{u_n\}$ is bounded in $\mathcal{E}$ and so, up to a subsequence, it converges weakly in $\mathcal{E}$, and then in $D^{1,2}(\RT)$  to $u$. Then, proceeding as before, we can conclude.
	\end{proof}
Now, using Theorem~\ref{thmE}, we generalise \cite[Proposition 2.2]{SS20}, which is valid for $u\in H^1(\R^3)$, in our setting.
		\begin{prop}\label{pr3}
		For all $u\in\E$, 
\begin{equation}\label{L3}
			\|u\|_3^3\le \frac1\pi\|\phi_u\|_{\mathcal{A}}\|\nabla u\|_2.
		\end{equation}
	\end{prop}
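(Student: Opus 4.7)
The natural plan is to reduce to the $H^1$-case handled in \cite{SS20} by an approximation argument. By Proposition~\ref{density}, $C_c^\infty(\R^3)$ is dense in $\E$, and obviously $C_c^\infty(\R^3)\subset H^1(\R^3)$. So, given $u\in\E$, I would pick a sequence $\{u_n\}\subset C_c^\infty(\R^3)$ with $u_n\to u$ in $\E$, apply \cite[Prop.~2.2]{SS20} to each $u_n$ to get
\[
\|u_n\|_3^3 \;\le\; \tfrac{1}{\pi}\,\|\phi_{u_n}\|_\A\,\|\nabla u_n\|_2,
\]
and then let $n\to\infty$.

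Passing to the limit requires three convergences. First, $\|\nabla u_n\|_2\to\|\nabla u\|_2$, which is immediate since the norm on $\E$ controls $\|\nabla\cdot\|_2$, so convergence in $\E$ entails convergence in $D^{1,2}(\R^3)$. Second, $\|u_n\|_3\to\|u\|_3$: by Theorem~\ref{tE36}, $\E$ embeds continuously into $L^3(\R^3)$, so $u_n\to u$ in $L^3(\R^3)$ and the $L^3$-norms (hence their cubes) converge. Third, $\|\phi_{u_n}\|_\A\to\|\phi_u\|_\A$, which is precisely the content of Lemma~\ref{lem3.4}(\ref{stronvconv}) (and is consistent with Theorem~\ref{thmE}, which guarantees that $\phi_u\in\A$ for every $u\in\E$).

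With these three convergences in place, taking the limit in the SS20 inequality yields \eqref{L3} for every $u\in\E$. The substantive work has really been done in the previous sections, so this step is essentially bookkeeping and I expect no real obstacle. For completeness it is worth noting that a naive direct proof on $C_c^\infty(\R^3)$ via multiplying $-\Delta\phi_u+\Delta^2\phi_u=4\pi u^2$ by $u$ and integrating by parts gives
\[
4\pi\int_{\R^3} u^3\,dx \;=\; \int_{\R^3}\nabla u\cdot\nabla\phi_u\,dx \;+\; \int_{\R^3}\Delta u\,\Delta\phi_u\,dx,
\]
and Cauchy--Schwarz on each term produces only the weaker bound $\|u\|_3^3\le\frac{1}{4\pi}\|u\|_\A\|\phi_u\|_\A$, which brings in the unwanted $\|\Delta u\|_2$. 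The sharper estimate \eqref{L3}, in which only $\|\nabla u\|_2$ appears, is the nontrivial content of the cited $H^1$-statement in \cite{SS20}, which we import and transfer to $\E$ by the density argument above.
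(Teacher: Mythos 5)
Your overall strategy (density of smooth functions plus the $H^1$ inequality from \cite{SS20}) is exactly the paper's, but your second convergence contains a genuine circularity: you invoke Theorem~\ref{tE36} to get $u_n\to u$ in $L^3(\R^3)$, whereas in the paper Theorem~\ref{tE36} is \emph{deduced from} Proposition~\ref{pr3}. Indeed, the $\tau=3$ endpoint of that embedding is nothing but \eqref{L3} itself, since $\|\phi_u\|_\A=\bigl(4\pi V(u^2,u^2)\bigr)^{1/2}$ gives $\|u\|_3^3\le C\,V(u^2,u^2)^{1/2}\|\nabla u\|_2\le C\|u\|_\E^3$, with the rest of the range following from Sobolev and interpolation. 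No independent proof of $\E\hookrightarrow L^3$ is available at this point in the paper, so as written your limit passage assumes what is to be proved. The repair is cheap and is what the paper does: convergence in $\E$ gives convergence in $D^{1,2}(\R^3)$, hence in $L^6(\R^3)$, hence a.e.\ convergence along a subsequence, and Fatou's lemma yields the one-sided bound $\|u\|_3^3\le\liminf_{n\to+\infty}\|u_n\|_3^3$, which is all that is needed to pass to the limit in an inequality.

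Your other two convergences are sound and not circular: $\|\nabla u_n\|_2\to\|\nabla u\|_2$ is immediate, and $\|\phi_{u_n}\|_\A\to\|\phi_u\|_\A$ follows from Lemma~\ref{lem3.4}(\ref{stronvconv}) (equivalently, from $\|\phi_{u_n}\|_\A^2=4\pi V(u_n^2,u_n^2)$ together with the fact, from Proposition~\ref{PropA.2}, that $u\mapsto V(u^2,u^2)^{1/4}$ is a norm), and Lemma~\ref{lem3.4} is proved before Proposition~\ref{pr3} using only Theorem~\ref{thmE}. For comparison, the paper's proof is slightly more economical on this point as well: it takes the explicit cutoff sequence $u_n=\theta_n u$ from the proof of Proposition~\ref{density}, for which $u_n^2\le u^2$, so positivity of $\K$ and the energy identity of Theorem~\ref{thmE} give directly $\|\phi_{u_n}\|_\A^2=4\pi\int_{\R^3}\phi_{u_n}u_n^2\,dx\le 4\pi\int_{\R^3}\phi_u u^2\,dx=\|\phi_u\|_\A^2$, so that neither Lemma~\ref{lem3.4} nor full convergence of $\|\phi_{u_n}\|_\A$ is needed --- only this upper bound, combined with the Fatou step above. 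Your closing aside is correct: the naive integration-by-parts route produces $\|u\|_\A$ rather than $\|\nabla u\|_2$ (and would anyway require $\Delta u\in L^2(\R^3)$), which is precisely why the sharper $H^1$ result of \cite{SS20} must be imported.
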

\begin{proof}
Fix $u\in \E$ and
let $\{u_n\}\subset H^{1}(\R^3)$ be an approximating sequence for $u$ in the $\E$-norm defined  as in the proof of Proposition \ref{density}. By \cite[Proposition 2.2]{SS20}, for each $n\geq 1$,   $u_n$ satisfies \eqref{L3}. 
\\
Since $u_n\to u$ a.e. in $\R^3$,  by Fatou's Lemma 
\[	\|u\|^3_3 \leq \liminf_{n\to+\infty}\|u_n\|^3_3.\]
On the other hand, by Theorem~\ref{thmE}, and since $u_n^2\le u^2$, for any $n\ge 1$,   we also have 
\[\|\phi_{u_n}\|^2_{\mathcal{A}}=4\pi\int_{\R^3}\phi_{u_n}u^2_n dx\leq 4\pi \int_{\R^3}\phi_{u}u^2 dx=\|\phi_u\|_{\A}^2.\]
Hence, arguing as in the proof of Proposition \ref{density}, we deduce that $\nabla u_n\to \nabla u$ in $L^2(\R^3)$ and then we get \eqref{L3}. 
\end{proof}

By Proposition \ref{pr3} we deduce easily Theorem \ref{tE36}.

Let's now consider the radial setting.

\begin{proof}[Proof of Theorem \ref{tEr187}]
Let 
	$$ W(x):=\frac{1}{(1+|x|^2)^{1/4}(1+|\log |x||)^\alpha},~\alpha>\frac12.$$
Define the following spaces
	$$L^2_{W}(\R^3):=\Big \{u:\R^3\to\R : \int_{\R^3}W(x)u^2\,dx<+\infty \Big\},$$
    and
	$$H_{W,r}(\R^3):=D_r^{1,2}(\R^3)\cap L^2_{W}(\R^3).$$
	Here, $H_{W,r}(\R^3)$ is a Hilbert space   equipped with the norm
	$$\|u\|_{H_{W,r}}:=\left(\int _{\R^3}(|\nabla u|^2+W(x)u^2)dx\right)^\frac{1}{2}.$$
By Theorem~\ref{Thm1}, we know that $\E_r$ is continuously embedded into $H_{W,r}(\R^3)$.
Now fix $\eps>0$ sufficiently small. By applying \cite[Theorem 1]{SWW07} with $V\equiv W$, $a=-1/2-\eps$, $a_0=\eps$, $Q\equiv 1$, $b= b_0=0$   we infer
\begin{equation*}
		H_{W,r}(\R^3)\hookrightarrow L^\tau(\R^3),\qquad \text{for all } \tau\in 
        \left[\frac{18+4\varepsilon}{7-2\varepsilon},6\right].
	\end{equation*}
By the arbitrariness of $\eps$, we deduce that the optimal range for the continuous embedding is $(18/7, 6]$. 
\\
Arguing as
in the proof of Theorem 1.2 in \cite{R10}, we deduce also the compact embedding.
\end{proof}

In the same spirit of the previous result,   let 
	$$ Z(x):=\frac{1}{(1+|x|)^{\gamma}}, \quad \gamma>\frac12$$
    and define $L^2_Z(\RT)$ and $H_Z(\RT)$ as before, without the radiality assumptions. We have the following embedding.
    \begin{prop}
    The space $\mathcal E$ has continuous embedding into $H_Z(\mathbb R^3)$. In fact, 
the embedding into $L^2_Z(\mathbb R^3)$ is  compact.
    \end{prop}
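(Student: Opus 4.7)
The plan is to reduce both assertions to Theorem~\ref{Thm1}, to the Sobolev embedding $D^{1,2}(\RT) \hookrightarrow L^6(\RT)$, and to Rellich--Kondrachov on bounded domains. Fix any $\alpha > 1/2$ and set $W(x) := 1/[(1+|x|^2)^{1/4}(1+|\log|x||)^\alpha]$ as in Theorem~\ref{thm3.1}. The decisive pointwise observation is that the ratio $Z(x)/W(x)$ behaves like $(\log|x|)^\alpha/|x|^{\gamma-1/2}$ as $|x|\to+\infty$, hence tends to zero since $\gamma>1/2$. In particular, for every $\eps>0$ there exists $R_\eps>0$ such that $Z(x)\le \eps\, W(x)$ for all $|x|\ge R_\eps$.

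For the continuous embedding into $H_Z(\RT)$, since $\|\nabla u\|_2\le \|u\|_\E$ it suffices to bound $\int Z u^2 dx$. Splitting at $|x|=R$ (chosen so that $Z\le W$ on $B_R^c$), the outer part is controlled by
$$\int_{B_R^c} Z u^2 dx \le \int_{\RT} W u^2 dx \le C\, V(u^2,u^2)^{1/2} \le C\, \|u\|_\E^2,$$
where the middle inequality is Theorem~\ref{Thm1} and the last one is the very definition of $\|\cdot\|_\E$. For the inner part, using $Z\le 1$, H\"older, and $D^{1,2}\hookrightarrow L^6$,
$$\int_{B_R} Z u^2 dx \le \int_{B_R} u^2 dx \le |B_R|^{2/3} \|u\|_6^2 \le C \|\nabla u\|_2^2 \le C \|u\|_\E^2.$$
Summing yields the continuous embedding.

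For the compactness into $L^2_Z(\RT)$, take $u_n \weakto u$ in $\E$. By Proposition~\ref{PropA.2} the map $v\mapsto V(v^2,v^2)^{1/4}$ is a norm on $\E$, so it is subadditive and the quantities $V((u_n-u)^2,(u_n-u)^2)$ stay uniformly bounded along the sequence. Given $\eps>0$, fix $R=R_\eps$ so that the tail estimate above applied to $u_n-u$ gives
$$\int_{B_R^c} Z\,(u_n - u)^2 dx \le C\,\eps\, V((u_n-u)^2,(u_n-u)^2)^{1/2} \le C' \eps.$$
On the bounded ball $B_R$, weak convergence in $D^{1,2}(\RT)$ combined with the Rellich--Kondrachov theorem yields $u_n\to u$ strongly in $L^2(B_R)$, hence in $L^2_Z(B_R)$ since $Z\le 1$ there. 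Letting $n\to+\infty$ and then $\eps\to 0$ concludes the compactness.

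The main obstacle, and the reason Theorem~\ref{Thm1} is indispensable here, is precisely the uniform tail estimate: neither the Dirichlet norm nor the Sobolev embedding detects the polynomial decay of $u$ at infinity, whereas the Bopp-Podolsky energy $V(u^2,u^2)$ provides exactly the weighted $L^2$ control needed to dominate $\int_{B_R^c} Zu^2 dx$ uniformly on bounded subsets of $\E$. Once this is in hand, the argument is a routine split-and-localize scheme.
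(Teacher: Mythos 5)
Your proof is correct and follows essentially the same route as the paper: the same pointwise comparison $Z\le \eps\,W$ outside a large ball (so that Theorem~\ref{Thm1} controls the tail by $V(u^2,u^2)^{1/2}$), the same inner estimate via $L^6$ and H\"older, and local Rellich compactness for the $L^2_Z$ statement. The only cosmetic difference is that the paper phrases compactness for $u_n\weakto 0$ while you subtract the weak limit and invoke the triangle inequality for $v\mapsto V(v^2,v^2)^{1/4}$ from Proposition~\ref{PropA.2}, which is the same reduction made explicit.
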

    \begin{proof}
     If $u\in \mathcal E$, fixed $\varepsilon>0$ there is $R_\varepsilon>0$ such that
     \begin{align*}
        \int_{\mathbb R^3} Z(x) u^2 dx &\le \int_{B_{R_\varepsilon}} u^2 dx + \int_{B_{R_\varepsilon}^c} \frac{(1+|x|^2)^{1/4}(1+|\log |x| |)^\alpha}{(1+|x|)^\gamma} \frac{u^2(x)}{(1+|x|^2)^{1/4} 
(1+|\log |x| |)^\alpha} dx \\
&\le   \int_{B_{R_\varepsilon}} u^2 dx + \varepsilon
\int_{B_{R_\varepsilon}^c}\frac{u^2(x)}{(1+|x|^2)^{1/4} 
(1+|\log |x| |)^\alpha} dx \\
&\le   C \left(\|u\|_6^2 + \varepsilon V(u^2, u^2)^{1/2}\right)\\
&\le  C\left( \|\nabla u\|_{2}^{2} +\varepsilon V(u^2,u^2)^{1/2}\right)
     \end{align*}
     where we used Theorem \ref{thm3.1}.
     Then the continuous embedding holds.
     For the second part, let $u_n\rightharpoonup 0$ in $\mathcal E$, so that in particular $u_n\rightharpoonup 0$
     in $D^{1,2}(\RT)$ and $\{V(u_n^2, u_n^2)\}$ is bounded.
     Given $\varepsilon>0$, there is $R_\varepsilon>0$ such that, arguing as in the previous estimates,
          \[
      \int_{\mathbb R^3} Z(x) u_n^2 dx \le \int_{B_{R_\varepsilon}} u_n^2 dx + \varepsilon C V(u_n^2, u^2_n)^{1/2} =o_n(1) + \varepsilon C,
     \]
and so we can conclude the proof.
    \end{proof}
	\section{The existence result}\label{Se5}
	In this section, we focus on proving Theorem \ref{Thm2}, that is, the existence of the weak solutions of system \eqref{eq1.1} for the case $p\in (3,6)$. 
The  functional associated to problem \eqref{eq1.1} is 
	\begin{equation}\label{I}
		I(u) = \frac{1}{2}\|\nabla u\|_2^2 + \frac{q^2}{4}V(u^2, u^2) - \frac{1}{p}\|u\|^p_p.
	\end{equation}
By Theorem \ref{tE36} the functional $I$ 	is well-defined on $\E$, for each $p\in [3,6]$ and, by Theorem \ref{tEr187}, on $\E_r$, for each $p\in (18/7,6]$. Let us show that $I$ is a $C^2$-functional on both $\E$ and $\E_r$. 
    
First let us give the following preliminary result.

\begin{lem}\label{VFrechet}
		The functional $I_V:= u\in\E\mapsto V(u^2,u^2)\in\R$ is smooth and for each $u,v\in \E$
        \[dI_V(u)[v]=4V(u^2,u v).\]
	\end{lem}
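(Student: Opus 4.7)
The strategy is to expand $V((u+v)^2,(u+v)^2)$ using the bilinearity and symmetry of $V$ (Proposition~\ref{prop:main}), isolate the linear-in-$v$ part as the candidate Fréchet derivative, and bound all remainder terms via the Cauchy--Schwarz inequality for $V$ together with the estimate $V(uv,uv)\le V(u^2,u^2)^{1/2}V(v^2,v^2)^{1/2}$ from \eqref{uvuv}. Since $I_V$ is morally a quartic form in $u$, the same argument will in fact produce $C^\infty$-regularity, with Fréchet derivatives of order five or higher vanishing identically.

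Concretely, expanding $(u+v)^2=u^2+2uv+v^2$ and using bilinearity and symmetry of $V$ gives
\[
I_V(u+v)-I_V(u)=4V(u^2,uv)+2V(u^2,v^2)+4V(uv,uv)+4V(uv,v^2)+V(v^2,v^2).
\]
The candidate derivative is $dI_V(u)[v]:=4V(u^2,uv)$; by Proposition~\ref{prop:main} followed by \eqref{uvuv},
\[
|V(u^2,uv)|^2\le V(u^2,u^2)\,V(uv,uv)\le V(u^2,u^2)^{3/2}V(v^2,v^2)^{1/2},
\]
which, recalling that $V(u^2,u^2)^{1/2}\le\|u\|_\E^2$, yields $|V(u^2,uv)|\le\|u\|_\E^3\|v\|_\E$, so $dI_V(u)\in\E^*$. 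The remaining four terms are bounded by the same tools: $|V(u^2,v^2)|,|V(uv,uv)|\le\|u\|_\E^2\|v\|_\E^2$, $|V(uv,v^2)|\le\|u\|_\E\|v\|_\E^3$ and $V(v^2,v^2)\le\|v\|_\E^4$. Summing gives a remainder $R(v)$ with $|R(v)|=O(\|v\|_\E^2)=o(\|v\|_\E)$ as $v\to 0$ in $\E$, which establishes Fréchet differentiability with the claimed formula.

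For smoothness, an analogous expansion of $dI_V(u+w)[v]-dI_V(u)[v]$ isolates the second derivative $d^2I_V(u)[v,w]=4V(u^2,vw)+8V(uv,uw)$ plus a remainder that is again $o(\|w\|_\E)$ by the same Cauchy--Schwarz-type bounds; iterating at most two further times produces explicit multilinear expressions for $d^3I_V$ and $d^4I_V$, all continuous on $\E$, while every derivative of order $\ge 5$ vanishes identically thanks to the quartic nature of $V(u^2,u^2)$. Continuity of the maps $u\mapsto d^kI_V(u)$ in the operator norm follows from telescoping their polynomial dependence on $u$ and estimating each piece as above. The main potential obstacle is purely combinatorial bookkeeping of the many bilinear pieces; no analytic input beyond Proposition~\ref{prop:main} and \eqref{uvuv} is needed.
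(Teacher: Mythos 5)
Your proof is correct. All the expansion terms lie in $\E_1$ thanks to \eqref{uvuv}, your Cauchy--Schwarz estimates via \eqref{CS} are valid (e.g. $|V(u^2,uv)|\le V(u^2,u^2)^{3/4}V(v^2,v^2)^{1/4}\le\|u\|_\E^3\|v\|_\E$, using $V(u^2,u^2)^{1/4}\le\|u\|_\E$), and your second-derivative formula $d^2I_V(u)[v,w]=4V(u^2,vw)+8V(uv,uw)$ agrees with what the paper's argument produces. Your route, however, is organized differently from the paper's. The paper factors $I_V=T\circ S$, where $S:\E\to\E_1$ is the squaring map $S(u)=u^2$ and $T(v)=V(v,v)$: by Remark~\ref{preHilbert}, $V$ is an inner product on $\E_1$, so $T$ is automatically smooth, while smoothness of $S$ follows from the single algebraic identity $(u+v)^2-u^2-2uv=v^2$, giving $\|(u+v)^2-u^2-2uv\|_{\E_1}=\|v^2\|_{\E_1}\le\|v\|_\E^2$, hence $dS(u)[v]=2uv$ and $d^2S(u)[v,w]=2vw$; the claimed formula then drops out of the chain rule. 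Your direct quartic expansion proves the same statement from the same two analytic inputs (\eqref{CS} and \eqref{uvuv}) but trades the chain rule for explicit remainder bookkeeping. What the factorization buys is economy: the quadratic map $S$ has an exact second-order Taylor expansion with zero error, so no $o(\|v\|_\E)$ estimate is ever needed and smoothness to all orders is immediate. What your approach buys is explicitness: closed formulas for $dI_V$ and $d^2I_V$ with quantitative operator-norm bounds, and the observation that $I_V$ is a continuous degree-four polynomial on $\E$ (via the bounded $4$-linear form $(u_1,u_2,u_3,u_4)\mapsto V(u_1u_2,u_3u_4)$), whence $d^kI_V\equiv 0$ for $k\ge 5$ --- a fact the paper leaves implicit. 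One point worth stating explicitly in your write-up, though you do cite the right results: the bilinear expansion of $V((u+v)^2,(u+v)^2)$ is legitimate precisely because $u^2$, $uv$, $v^2$ all belong to $\E_1$, where $V$ is a genuine (finite-valued) bilinear form by Proposition~\ref{prop:main} and Remark~\ref{preHilbert}.
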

\begin{proof}
We note that 	$I_V$ is the composition of
$S: \mathcal E\to \mathcal E_1$,  $S(u)=u^2$ and $T:\mathcal E_1 \to \R$, $T(v)=V(v, v)$. By Remark~\ref{preHilbert}, $T$ is smooth and so we just need to prove that $S$ is smooth. 
Let $u, v\in \E$. As 
$(u+v)^2- u^2-2uv=v^2$, we get $\|(u+v)^2- u^2-2uv\|_{\E_1}=\|v^2\|_{\E_1}\leq \|v\|^2_{\E}$.
Hence  
$S$ is Fréchet differentiable on $\mathcal E$  with differential at $u\in\mathcal E$ given by $dS(u)[v] = 2uv$
(note that $uv\in \E_1$ by \eqref{uvuv}). 
Now  $dS$ is itself Fréchet differentiable on $\mathcal E$ and, for each $u,v,w\in \E$, $d^2S(u)[v,w]=2vw$ and hence $S$ is smooth.
Finally, by the chain rule, we then get
\[dI_V(u)[v]=dT(S(u))\big[dS(u)[v]\big]=2V(S(u), 2uv)=4V(u^2,uv),\]
completing the proof.
 \end{proof}
We observe now that the first term in the expression of  $I$ in \eqref{I} is clearly smooth in $\E$ and, by Theorem \ref{tE36}, the last one belongs to $C^2(\E)$ if $p\in[3,6]$; analogously, it is $C^2(\E_r)$ if $p\in (18/7,6]$ thanks  to Theorem \ref{tEr187}.
Thus,  from Lemma~\ref{VFrechet} we can state
\begin{prop}\label{IC1}
		For each $p\in[3,6]$, $I\in C^2(\E)$ and   for any $u, v\in \E$
		\begin{equation*}
			I'(u)[v] = \int_{\R^3} \nabla u \cdot \nabla v \, dx + q^2 \int_{\R^3} \phi_u u v \, dx - \int_{\R^3} |u|^{p-2}u v \, dx.
		\end{equation*}
The same result holds on $\mathcal E_r$, for $p\in(18/7,6]$.
	\end{prop}

As an obvious consequence of Proposition~\ref{IC1} we have
	\begin{cor}
	Let $p\in[3,6]$ (resp. $p\in(18/7,6]$) and $u\in \E$ (resp. $u\in \E_r$). Then $(u,\phi_u)$   is a weak solution of the system  \eqref{eq1.1} if and only if	$I'(u)=0$. 	
\end{cor}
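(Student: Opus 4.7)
The plan is to combine two already-established ingredients: the reduction furnished by Theorem \ref{thmE}, which allows us to eliminate $\phi$ from the system, and the derivative formula of Proposition \ref{IC1}, which identifies the Euler--Lagrange equation of $I$ with the weak formulation of the first equation of \eqref{eq1.1}.

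First I would observe that, by Theorem \ref{thmE}, for every $u\in\E$ the function $\phi_u=\K\ast u^2$ lies in $\A$ and is the unique weak solution in $\A$ of the second equation $-\Delta\phi+a^2\Delta^2\phi=4\pi u^2$ of the system \eqref{eq1.1}. Consequently, the pair $(u,\phi_u)$ automatically satisfies the second equation of \eqref{eq1.1} in the weak sense, and $(u,\phi_u)$ is a weak solution of the whole system (in the sense introduced just before Theorem \ref{Thm2}) if and only if $u$ satisfies the first equation in weak form, namely
\[
\int_{\R^3}\nabla u\cdot\nabla\varphi\,dx+q^2\int_{\R^3}\phi_u u\varphi\,dx=\int_{\R^3}|u|^{p-2}u\varphi\,dx\qquad\text{for every }\varphi\in\E.
\]

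Next I would invoke Proposition \ref{IC1}, which (in particular for the admissible exponent range thanks to Theorem \ref{tE36}) gives
\[
I'(u)[\varphi]=\int_{\R^3}\nabla u\cdot\nabla\varphi\,dx+q^2\int_{\R^3}\phi_u u\varphi\,dx-\int_{\R^3}|u|^{p-2}u\varphi\,dx
\]
for all $u,\varphi\in\E$. Comparing the two identities, the weak-form equation above holds for every $\varphi\in\E$ if and only if $I'(u)[\varphi]=0$ for every $\varphi\in\E$, that is, $I'(u)=0$ in $\E^{\ast}$. This establishes the equivalence in the whole space $\E$ for $p\in[3,6]$.

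For the radial setting with $p\in(18/7,6]$, Proposition \ref{IC1} still applies (now using the embedding of Theorem \ref{tEr187} to handle the nonlinear term for $p$ down to $18/7$), so the same chain of equivalences proves that $I'|_{\E_r}(u)=0$ if and only if the weak equation above holds for all radial test functions $\varphi\in\E_r$. To upgrade this to arbitrary $\varphi\in\E$ I would apply Palais' principle of symmetric criticality, which is available because the natural $SO(3)$ action is isometric on both $\E$ and $\A$ and $I$ is rotation-invariant. I do not expect a genuine obstacle here: the content of the statement is really a bookkeeping corollary of Theorem \ref{thmE} (which makes the reduction $\phi\mapsto\phi_u$ well posed and linear-in-source) and of Proposition \ref{IC1} (which computes $I'$); the only mild care is to ensure that $I$ is of class $C^1$ up to the endpoints $p=6$ (via $\E\hookrightarrow L^6$) and $p=3$ (via $\E\hookrightarrow L^3$), both already guaranteed by Theorem \ref{tE36}.
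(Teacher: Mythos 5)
Your core argument is exactly the paper's: the corollary is stated there as an immediate consequence of Proposition~\ref{IC1}, with Theorem~\ref{thmE} guaranteeing that $\phi_u$ is the unique solution in $\A$ of the second equation, so that ``$(u,\phi_u)$ weak solution'' reduces to the vanishing of the expression $I'(u)[\varphi]$ for all admissible test functions $\varphi$. That part of your proposal is correct and needs nothing more.

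The one place where you go beyond the paper --- invoking Palais' principle of symmetric criticality to upgrade, in the radial case, from test functions in $\E_r$ to arbitrary $\varphi\in\E$ --- is both unnecessary and, for part of the stated range, unavailable. For $p\in(18/7,3)$ the functional $I$ is not defined on all of $\E$: Theorem~\ref{tE36} only gives $\E\hookrightarrow L^\tau(\RT)$ for $\tau\in[3,6]$, so $\|u\|_p^p$ may be infinite for non-radial $u\in\E$, and likewise the pairing $\int_{\RT}|u|^{p-2}u\varphi\,dx$ with $u\in\E_r$ need not be finite for a general $\varphi\in\E$, since $|u|^{p-1}\in L^{p/(p-1)}(\RT)$ would require $\varphi\in L^p(\RT)$ with $p<3$. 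Palais' principle presupposes an invariant $C^1$ functional on the full space, which is precisely what fails here; this is why Proposition~\ref{IC1} treats the two ranges separately ($p\in[3,6]$ on $\E$, $p\in(18/7,6]$ on $\E_r$). The correct reading of the corollary in the radial case is therefore that $I'(u)=0$ means criticality of $I|_{\E_r}$ and the weak formulation is tested against $\varphi\in\E_r$; with that reading your first two paragraphs already give the full proof, and the symmetric-criticality paragraph should simply be dropped (for $p\in[3,6]$ it is harmless but redundant, since then $u\in\E_r\subset\E$ and the full-space statement applies directly).
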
		
Now we pass to the main  goal of this section. In order to get the existence of a solution, taking into account the results in \cite{CLRT22,DS19}, we use a perturbation approach.
For this purpose, we introduce the following perturbed system by adding a positive small mass term to the system \eqref{eq1.1}. More precisely, for any $\eps>0$ and $3<p<6$, we consider
	\begin{equation}\label{eq3.5}\tag{$\P_\eps$}
\begin{cases}
			-\Delta u+\eps u +q^2\phi u=|u|^{p-2}u\\
			-\Delta \phi+\Delta^2\phi=4\pi u^2
		\end{cases}\text{ in }\RT.   
	\end{equation}
Solutions of \eqref{eq3.5} can be found as critical points of the energy functional $I_\eps:H^1(\R^3)\to \R$, given by
	\begin{equation*}
		I_\eps(u):=\frac12\|\nabla u\|_2^2+\frac{\eps}{2}\|u\|_2^2+\frac{q^2}{4}V(u^2,u^2)%\int_{\R^3}\int_{\R^3}\K(x-y)u^2(x)u^2(y)\,dx\,dy
        -\frac{1}{p}\|u\|^p_p.
	\end{equation*}
By \cite{DS19}, we know that any solution of \eqref{eq3.5} satisfies the Pohozaev type identity
\begin{align*}
     P_\eps(u):=&\frac12\|\nabla u\|_2^2+\frac{3\eps}{2}\|u\|_2^2-\frac{3}{p}\|u\|_p^p\\
		&\quad+\frac{q^2}{4}\int_{\R^3}\int_{\R^3}\frac{5(1-e^{-{|x-y|}})+|x-y|e^{-{|x-y|}}}{|x-y|}u^2(x)u^2(y)\,dx\,dy=0.
\end{align*}
Following \cite{CLRT22}, we define the Nehari-Pohozaev set
	$$\mathcal{M}_\eps:=\Big\{u\in H^1(\R^3)\setminus\{0\}: J_\eps(u)=0 \Big\},$$
	where
	\begin{equation}\label{eq3.20}
\begin{split}
		J_\eps(u):=&2I_\eps'(u)[u]-P_\eps(u)=\frac32\|\nabla u\|_2^2+\frac{\eps}{2}\|u\|_2^2-\frac{2p-3}{p}\|u\|_p^p \\
		&\quad+\frac{3q^2}{4}\int_{\R^3}\int_{\R^3}\frac{1-e^{-{|x-y|}}-\frac{|x-y|}{3}e^{-{|x-y|}}}{|x-y|}u^2(x)u^2(y)\,dx\,dy.
\end{split}
	\end{equation}
	Then, by \cite[Corollary 1.6]{CLRT22} we know that there exists a ground state solution $u_\eps\in H^1(\R^3)$ such that
    \begin{equation}\label{eq3.21}
		0<{m}_\eps:=I_\eps(u_\eps)=\inf_{u\in \mathcal{M}_\eps}I_\eps(u)
        =\inf_{u\in H^1(\R^3)\setminus\{0\} }\max_{t>0}I_\eps\big(t^2u(tx)\big).
	\end{equation}
The following inequalities hold (compare with \cite[Lemma 3.1]{CLRT22}).
	\begin{lem}
		Let $b\ge0$. Then 
		\begin{equation}\label{eq3.24}
			t^3(e^{-\frac{b}{t}}-e^{-b})+\frac{1-t^3}{3}be^{-b}\ge0, ~~\forall t>0,
		\end{equation}
		and
		\begin{equation}\label{eq3.33}
			\frac12(1-e^{-b})-\frac{1}{3}be^{-b}\ge0.
		\end{equation}
	\end{lem}
	In view of the above lemmas, we now intend to show that the energy value $m_\eps$ defined in \eqref{eq3.21} is uniformly bounded by positive constants both from above and from below. This plays a key role when we rule out the vanishing case in the proof of Theorem \ref{Thm2}.
	\begin{lem}\label{lem3.5}
		Let $q>0$ and $p\in (3,6)$. There exists a constant $C>0$ such that for any $\eps\in(0,1)$ it holds
		\begin{equation*}
			C\le m_\eps\le m_1.
		\end{equation*}
		Moreover, the family $\{u_\eps\}_{\eps>0}$ is bounded in $\E$ and bounded away from $0$ in $L^p(\RT)$.
	\end{lem}
	\begin{proof}
		Let $u_1\in \mathcal{M}_1$ be such that $J_1(u_1)=0$ and $I_1(u_1)=\inf_{u\in\mathcal M_1}I_1(u)=:m_1$. Following \cite[Lemma 3.8]{CLRT22}, there exists a unique value $t_\eps>0$ such that $t_\eps^2 u_1(t_\eps \cdot)\in \mathcal{M}_\eps$. Then, we deduce from \eqref{eq3.21} that
		$$m_\eps\le I_\eps\big(t_\eps^2 u_1(t_\eps \cdot)\big).$$
		Thus, since $J_1(u_1)=0$, we obtain that
  \begin{equation*}
			\begin{split}
				m_1-m_\eps
				& \ge I_1(u_1)-I_\eps\big(t_\eps^2 u_1(t_\eps \cdot)\big)
                \\
                &=\frac{1-t_\eps^3}{2}\|\nabla u_1\|_2^2
                +\frac{1-\eps t_\eps}{2}\|u_1\|_2^2
                +\frac{t_\eps^{2p-3}-1}{p}\|u_1\|_p^p 
                \\
				&~~~~+\frac{q^2}{4}\int_{\R^3}\int_{\R^3} \frac{1-e^{-|x-y|} - t_\varepsilon^3(1-e^{-\frac{|x-y|}{t_\eps}})}{|x-y|}u_1^2(x)u_1^2(y)\,dx\,dy
                \\
            &=\left(\frac{1-\eps t_\eps}{2}-\frac{1-t_\eps^3}{6}\right) \|u_1\|_2^2
                +\frac{1}{p}\left[t_\eps^{2p-3}-1+\frac{(1-t_\eps^3)(2p-3)}{3}\right]\|u_1\|_p^pdx
                \\
				&~~~~+\frac{q^2}{4}\int_{\R^3}\int_{\R^3}\frac{t_\eps^3(e^{-\frac{|x-y|}{t_\eps}}-e^{-{|x-y|}})+(1-t_\eps^3)
					\frac{|x-y|}{3}e^{-{|x-y|}}}{|x-y|}u_1^2(x)u_1^2(y)\,dx\,dy.
			\end{split}
		\end{equation*}
		Define $$f_\varepsilon(t):=\frac{1-\eps t}{2}-\frac{1-t^3}{6} \quad\text{and}\quad g(t):=t^{2p-3}-1+\frac{(1-t^3)(2p-3)}{3}\quad\text{for } t>0.$$
        A simple calculation shows that
 $$f_\varepsilon(t_\eps)\ge f_\varepsilon(\eps^{\frac12})=\frac13-\frac13\eps^{\frac32}>0, \text{ for any } \eps\in(0,1), $$
		and
		$$g(t_\eps)\ge g(1)=0, $$
		which, combined with \eqref{eq3.24}, implies that
		\begin{equation}\label{eq3.28}
			m_\eps\le m_1.
		\end{equation}
Let us show that the above inequality implies the boundedness of $\{u_\varepsilon \}_{\varepsilon >0}$ in $\mathcal{E}$.
Since $J_\eps(u_\eps)=0$ and $I_\eps(u_\eps)=m_\eps$, it is easy to derive that
		\begin{equation}\label{eq3.26}
			\begin{aligned}
				m_\eps
                &=I_\eps(u_\eps)-\frac{1}{2p-3}J_\eps(u_\eps)\\
                &=\frac{p-3}{2p-3}\|\nabla u_\eps\|_2^2
				+\frac{(p-2)\eps}{2p-3}\|u_\eps\|_2^2
				%&~~~~+\int_{\R^3}\int_{\R^3}\frac{1-e^{-{|x-y|}}}{|x-y|}u_\eps^2(x)u_\eps^2(y)\,dx\,dy
                +\frac{q^2(p-3)}{2(2p-3)} V(u_\eps^2,u_\eps^2)\\
				&\quad\quad+\frac{q^2}{4(2p-3)}\int_{\R^3}\int_{\R^3}e^{-{|x-y|}}u_\eps^2(x)u_\eps^2(y)\,dx\,dy,
			\end{aligned}
		\end{equation}
and so \eqref{eq3.28}  gives that
the family $\{u_\eps\}_{\eps>0}$ is bounded in $\E$.

Observe, moreover that since $u_\eps\in \mathcal{M}_{\eps}$, by \eqref{eq3.20} and \eqref{eq3.33}, we get that
		\begin{equation}\label{eq3.32}
			\begin{aligned}
				\frac{2p-3}{p}\|u_\eps\|_p^p
				&=\frac32\|\nabla u_\eps\|_2^2+\frac{\eps}{2}\|u_\eps\|_2^2+\frac{3q^2}{8} V(u_\eps^2,u_\eps^2)\\
                &\quad\quad+\frac{3q^2}{4}\int_{\R^3}\int_{\R^3}\frac{\frac12(1-e^{-{|x-y|}})-\frac{|x-y|}{3}e^{-{|x-y|}}}{|x-y|}u_\eps^2(x)u_\eps^2(y)\,dx\,dy\\
				&\ge\frac32\|\nabla u_\eps\|_2^2+\frac{3q^2}{8} V(u_\eps^2,u_\eps^2)
                \\
				&\ge C\left(\|\nabla u_\eps\|_2^2+ V(u_\eps^2,u_\eps^2)\right).
			\end{aligned}
		\end{equation}
 Since $D^{1,2}(\R^3)\hookrightarrow L^6(\RT)$, for  $\lambda = (6-p)/p\in (0,1)$ and by \eqref{L3}, we have
		\begin{equation}\label{eq3.34}
			\begin{aligned}
          \|u_\eps\|_{p}
          &\le\|u_\eps\|_{3}^{\lambda} \|u_\eps\|_{6}^{1-\lambda}\\
          &\le C\left(\|\phi_{u_\eps}\|_{\A}\|\nabla u_\eps\|_2\right)^{\frac{\lambda }{3}}\|\nabla u_\eps\|_2^{1-\lambda } \\
        &\le C\left(\|\phi_{u_\eps}\|_{\A}^2+\|\nabla u_\eps\|_2^2\right)^{\frac{\lambda}{3}} \left(\|\nabla u_\eps\|_2^2+V(u_\eps^2,u_\eps^2)\right)^{\frac{1-\lambda }{2}}\\
   &\leq C \left(\|\nabla u_\eps\|_2^2+V(u_\eps^2,u_\eps^2)\right)^{\frac{3-\lambda }{6}}.
			\end{aligned}
		\end{equation}
Then by \eqref{eq3.32} and \eqref{eq3.34}
        we have 
        \begin{equation*}
        \|\nabla u_\eps\|_2^2+V(u_\eps^2,u_\eps^2)\le C \|u_\eps\|_{p}^p< C\left(\|\nabla u_\eps\|_2^2+V(u_\eps^2,u_\eps^2)\right)^{(2p-3)/3},
        \end{equation*}    that is
        \begin{equation}
            \label{ba0}
            \left(\|\nabla u_\eps\|_2^2+V(u_\eps^2,u_\eps^2)\right)^\frac{2(p-3)}{3} \geq C.
        \end{equation}
   Being $p>3$, the above inequalities imply the boundedness away from $0$ in $L^p(\RT)$ (and also in $\E$) of $\{u_\eps\}_{\eps>0}$.   
        
Finally, by \eqref{eq3.26} and \eqref{ba0} we get $m_\varepsilon\geq C$, concluding the proof. 
\end{proof}
Finally, with the help of the above lemmas, we  can prove Theorem \ref{Thm2}.
\begin{proof}[Proof of Theorem~\ref{Thm2}]
Let $\{\eps_n\}$ be such that $\eps_n \searrow 0$, as $n\to+\infty$.
	Recalling \eqref{eq3.21}, for every $n\in\mathbb{N}$, there exists $u_n:=u_{\eps_n}\in \mathcal{M}_{\eps_n}$, such that $I_{\eps_n}(u_n)=m_{\eps_n}$ and $I^\prime_{\eps_n}(u_n)= 0$. 
   We firstly claim that
\begin{equation*}
\liminf_{n\to+\infty}\sup_{z\in\R^3}\int_{B_1(z)}|u_n|^{3}dx= C>0.
	\end{equation*}
	On the contrary, there exists a subsequence of $\{u_n\}$, still denoted by $\{u_n\}$, such that
	$$\lim_{n\to+\infty}\sup_{z\in\R^3}\int_{B_1(z)}|u_n|^{3}dx=0,$$
	which, together with Lemma \ref{lem3.5},  Theorem \ref{tE36}, and \cite[Lemma I.1]{L84}, indicates that
    \begin{equation*}
		u_n\to 0\quad \text{  in } L^\tau(\R^3), \quad\text{ for all }\tau\in\left(3,6\right).
	\end{equation*}
This gives directly a contradiction with Lemma \ref{lem3.5}. Hence, there exists a sequence $\{z_n\}\subset\R^3$ such that
	\begin{equation*}
\lim_{n\to+\infty}\int_{B_1(z_n)}|u_n|^{3}dx\ge C>0.
	\end{equation*}
Set $v_n:=u_n(\cdot+z_n)$, we obtain that $$\lim_{n\to+\infty}\int_{B_1}|v_n|^{3}dx\ge C>0.$$
Furthermore, noting that $\{v_n\}$ is bounded in $\E$ (since $\{u_n\}$ is bounded in $\E$ by Lemma \ref{lem3.5}
and  $V(u_n^2,u_n^2)=V(v_n^2,v_n^2)$ due to the invariance by translations), up to a subsequence, there holds
 $v_n\rightharpoonup v$  weakly in $\E$,
	and
	$v_n\rightarrow v $      in $L^\tau_{\rm loc}(\R^3)$, for all $\tau\in[1,6).$ So $v\not\equiv0$.

     Since $u_n$ is a critical point of the energy functional $I_{\eps_n}$, for each $n\in \N$, due to the translation invariance, then we conclude that,
for all $\varphi\in C_c^{\infty}(\R^3)$,
	\begin{equation}\label{eq2.4}
		\int_{\R^3}\nabla v_n \cdot\nabla \varphi dx+\eps_n\int_{\R^3}v_n\varphi dx+q^2 \int_{\R^3}\phi_{v_n}v_n\varphi dx=\int_{\R^3}|v_n|^{p-2}v_n\varphi dx.
	\end{equation}
	Applying H\"older inequality and  Theorem \ref{tE36}, we get, as $n\to +\infty$,
	$$\int_{\R^3}\nabla v_n \cdot\nabla \varphi dx\to\int_{\R^3}\nabla v \cdot\nabla \varphi dx ,$$
	$$\left|\eps_n\int_{\R^3}v_n\varphi dx\right|\le \eps_n\left(\int_{\R^3}|v_n|^6 dx\right)^{\frac16}\left(\int_{\R^3}|\varphi|^{\frac65} dx\right)^{\frac56}\to0,$$
	and
	\begin{equation*}
		\begin{aligned}
			&\left|\int_{\R^3}|v_n|^{p-2}v_n\varphi dx-\int_{\R^3}|v|^{p-2}v\varphi dx\right|
			\le \left(\int_{{\rm supp}\varphi}\left||v_n|^{p-2}v_n -|v|^{p-2}v\right|^{\frac{p}{p-1}} dx\right)^{\frac{p-1}{p}}\left(\int_{\R^3}|\varphi|^p dx\right)^{\frac{1}{p}}\to 0.
		\end{aligned}
	\end{equation*}
Furthermore, by Lemma \ref{lem3.4} and Theorem \ref{thmE}, we also have that
	\begin{equation*}
		\begin{aligned}
		&\left|\int_{\R^3}\phi_{v_n}v_n\varphi dx-\int_{\R^3}\phi_{v}v\varphi dx\right|\\
			&\le \int_{\R^3}|\phi_{v_n}-\phi_{v}||v_n||\varphi| dx+\int_{\R^3}|\phi_{v}||v_n-v||\varphi| dx\\
            &\le\left(\int_{{\rm supp}\varphi}|\phi_{v_n}-\phi_{v}|^2dx\right)^{\frac12}\left(\int_{{\rm supp}\varphi}|v_n|^6 dx\right)^{\frac16}\left(\int_{{\rm supp}\varphi}|\varphi|^3 dx\right)^{\frac13}\\
            &~~~~+\left(\int_{{\rm supp}\varphi}|\phi_{v}|^6 dx\right)^{\frac16}\left(\int_{{\rm supp}\varphi}|v_n-v|^2 dx\right)^{\frac12}\left(\int_{{\rm supp}\varphi}|\varphi|^3 dx\right)^{\frac13}\to0, \quad\text{ as } n\to +\infty.
		\end{aligned}
	\end{equation*}
	As a consequence, for any $\varphi\in C_c^{\infty}(\R^3)$, we deduce from \eqref{eq2.4} that, as $n\to +\infty$,
	$$\int_{\R^3}\nabla v \cdot\nabla \varphi dx+q^2 \int_{\R^3}\phi_{v}v\varphi dx=\int_{\R^3}|v|^{p-2}v\varphi dx.$$
By Proposition \ref{density}, the previous identity holds also for every $\varphi\in \E$ and so $v$ is a critical point for $I$, namely there exists a non-trivial weak solution in $\E$ to system \eqref{eq1.1}.
	\end{proof}
 By using a similar reasoning as the above proof of  Theorem~\ref{Thm2} together with Theorem \ref{tEr187}, we can  more directly conclude that system \eqref{eq1.1} has a non-trivial weak solution in $\E_r$ for  $p\in (3,6)$.

\bigskip

{\bf Acknowledgments}
E.C., P.D., A.P., and G.S. are members of INdAM-GNAMPA and are  financed by European Union - Next Generation EU - PRIN 2022 PNRR ``P2022YFAJH Linear and Nonlinear PDE's: New directions and Applications".
E.C., P.D., and A.P. are  partially supported by the Italian Ministry of University and Research under the Program Department of Excellence L. 232/2016 (Grant No. CUP D93C23000100001).
P.D., A.P., and G.S. are partially supported by INdAM-GNAMPA Project 2025 (CUP E5324001950001).
E.C., P.D., and A.P. are partially supported by INdAM-GNAMPA Project 2026 (CUP E53C25002010001).
G.S. is partially supported by Capes, CNPq, FAPDF Edital 04/2021 - Demanda Espont\^anea,
Fapesp grants no. 2022/16407-1 and 2022/16097-2 (Brazil).
L.Y. is supported by the China Scholarship Council (Grant No. 202406660027) and appreciates the warm hospitality from  Politecnico di Bari and Dipartimento di Meccanica, Matematica e Management.

\smallskip

{\bf Data availability} Data sharing not applicable to this article as no datasets were generated
or analysed during the current study.

\smallskip

{\bf Conflict of interest} The authors declare that they have no conflict of interest.

\end{document}